\newtheorem{Th}{Theorem}
\newtheorem{Prop}{Proposition}
\newtheorem{Lm}{Lemma}
\newtheorem{Co}{Corollary}
\newtheorem{Qu}{Question}
\theoremstyle{definition}
\newtheorem{Def}{Definition}
\newcommand{\aut}{\mathrm{Aut}}
\newcommand{\bij}{\mathrm{Bij}}
\newcommand{\supp}{\mathrm{supp}}
\newcommand{\st}{\mathrm{St}}
\newcommand{\sym}{\mathrm{Sym}}
\newcommand{\rist}{\mathrm{rist}}
\newcommand{\comm}{\mathrm{comm}}
\newcommand{\dd}{\mathrm{d}}
\newcommand{\id}{\mathrm{Id}}
\newcommand{\homeo}{\mathrm{Homeo}}
\newcommand{\eg}{\text{e.g.\,}}
\newcommand{\ie}{\text{i.e.\;\,}}
\newcommand{\etc}{\text{etc.}}
\begin{document}
\title{On irreducibility and disjointness of Koopman and quasi-regular representations of weakly branch groups.}
\author{ {\bf Artem Dudko}  \\
                    Stony Brook University, Stony Brook, NY, USA  \\
          artem.dudko@stonybrook.edu \\
         {\bf Rostislav Grigorchuk}\footnote{NSF grant DMS-1207699, NSA grant H98230-15-1-0328 and ERC AG COMPASP. } \\        Texas A\&M University, College Station, TX, USA  \\      grigorch@math.tamu.edu }

\date{}

\maketitle

\section{Introduction.}
The main resources of examples of unitary representations of a countable group are Koopman, quasi-regular and groupoid representations. In the present paper we will study the Koopman and quasi-regular representations corresponding to actions of weakly branch groups on rooted trees. Their relation to the groupoid representation is studied in the second paper of the authors \cite{DuGr-spec} on the subject of spectral properties.

Branch  groups  were introduced  by  the  second  author  in  \cite{Gr00}  and  play  important  role  in many  investigations  in group theory  and  around  (see \cite{BGS03}).  They  posses  interesting  and  often  unusual  properties.  Branch just infinite groups constitute  one  of  three  classes  on  which  the  class  of  just  infinite  groups  (i.e.  infinite  groups  whose proper quotients  are finite)  naturally   splits. The class of branch groups  contains  groups  of  intermediate  growth, amenable  but  not  elementary  amenable  groups,  groups  with  finite  commutator  width  \etc.   Weakly branch  groups is a natural generalization of  the  class  of  branch  groups and   keep  many  nice  properties  of  branch  groups  (for instance  absence  of  nontrivial  laws, see \cite{Ab}). Weakly branch groups  also  play  important  role  in  studies  in holomorphic  dynamics (see \cite{Nekr}) and  in the  theory  of  fractals  (see \cite{GNS15}).

Throughout this paper we will assume that $G$ is a countable group. To every subgroup $H$ of $G$ one can associate a quasi-regular representation $\rho_{G/H}$ acting in $l^2(G/H)$. In particular, given an action of $G$ on a set $X$ for every $x\in X$ one can consider the corresponding quasi-regular representation $\rho_x=\rho_{G/\st_G(x)}$, where $\st_G(x)$ is the stabilizer of $x$ in $G$. Spectral proberties of quasi-regular representations play important roles in the Random Walks on groups and Schreier graphs (see \eg \cite{Kes59}, \cite{GrigZuk97} and \cite{Grig11}). In the case when $H=\{e\}$ (\ie is the trivial subgroup) the quasi-regular representation coincides with the regular representation $\rho_G$ which is of a special importance. Quasi-regular representations naturally give rise to Hecke algebras and their representations (see \eg \cite{An87} and \cite{Rad}).

A group acting on a rooted tree $T$ is called weakly branch if it acts transitively on each level of the tree and for every vertex $v$ of $T$ it has a nontrivial element $g$ supported on the subtree $T_v$ emerging from $v$ (see \eg \cite{BGS03} and \cite{Grig11}). Using Mackey criterion of irreducibility of quasi-regular representations Bartholdi and the second author in \cite{BG} showed that quasi-regular representations corresponding to the action of a weakly branch group on the boundary of a rooted tree are irreducible. One of the results of the present paper is the following:
\begin{Th}\label{ThQRDist} Let $G$ be a countable weakly branch group acting on a spherically homogeneous  rooted tree $T$. Then for any $x,y\in \partial T$, where $\partial T$ is the boundary of the tree, such that $x$ and $y$ are from disjoint orbits the quasi-regular representations $\rho_x$ and $\rho_y$ are not unitary equivalent.
\end{Th}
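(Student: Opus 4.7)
My plan is to suppose for contradiction that a unitary intertwiner $U\colon l^2(Gx)\to l^2(Gy)$ exists and set $f:=U\delta_x$; since $\delta_x$ is fixed by $\rho_x(\st_G(x))$ and $U$ intertwines, $f$ will be nonzero, of norm one, and $\rho_y$-invariant under $\st_G(x)$. Writing $f=\sum_{z\in Gy}c_z\delta_z$, invariance forces $c_{hz}=c_z$ for every $h\in\st_G(x)$, so $c_\cdot$ is constant on $\st_G(x)$-orbits, and the $l^2$-condition then forces the support of $f$ to lie inside the union of the \emph{finite} $\st_G(x)$-orbits in $Gy$. The task therefore reduces to showing that every such orbit is infinite, for then $f=0$ and we contradict unitarity of $U$.

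I will thus prove the key lemma: for every $z\in Gy$ the orbit $\st_G(x)\cdot z$ is infinite. Since $Gx\cap Gy=\emptyset$ in particular $z\neq x$, so the paths of $x$ and $z$ diverge at some finite level $n$. For each $k>n$ the vertex $v_k:=z|_k$ lies off the path of $x$, whence $\rist_G(v_k)\subseteq\st_G(x)$. If I can show $\rist_G(v_k)\not\subseteq\st_G(z)$ for every such $k$, then picking $g_k\in\rist_G(v_k)\setminus\st_G(z)$ gives $g_kz\neq z$ lying in $\partial T_{v_k}$, so $d(g_kz,z)\leq 2^{-k}\to 0$, producing infinitely many distinct points in $\st_G(x)\cdot z$.

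The crux is the non-inclusion $\rist_G(v_k)\not\subseteq\st_G(z)$. Suppose, aiming for contradiction, that $\rist_G(v_k)\subseteq\st_G(z)$. Since $\rist_G(v_k)$ is normal in $\st_G(v_k)$, conjugating by $s\in\st_G(v_k)$ gives $\rist_G(v_k)=s\rist_G(v_k)s^{-1}\subseteq\st_G(sz)$, so $\rist_G(v_k)$ fixes every point of $\st_G(v_k)\cdot z\subseteq\partial T_{v_k}$ pointwise. Next I will observe that $\st_G(v_k)$ acts level-transitively on $T_{v_k}$: any $g\in G$ carrying one level-$m$ descendant of $v_k$ to another must send $v_k$, the unique ancestor at level $k$, to itself, so level-transitivity of $G$ on level $m$ of $T$ descends to level-transitivity of $\st_G(v_k)$ on each level of $T_{v_k}$. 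This immediately forces $\st_G(v_k)\cdot z$ to be dense in $\partial T_{v_k}$. Together with the closedness of the fixed-point set of the continuous action of $\rist_G(v_k)$ on $\partial T_{v_k}$, this upgrades ``fixing a dense set'' to ``fixing $\partial T_{v_k}$ pointwise''; but a tree automorphism of a rooted subtree fixing every boundary point is the identity, so $\rist_G(v_k)=\{e\}$, contradicting the weakly branch hypothesis.

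The principal obstacle I anticipate is precisely this crux step, namely the propagation of a single fixed boundary point to the entire boundary: it requires combining the internal normality of $\rist_G(v_k)$ inside $\st_G(v_k)$, the level-transitivity of $\st_G(v_k)$ on $T_{v_k}$ (itself a consequence of spherical homogeneity and the ancestor argument above), and the continuity of the boundary action. Once this propagation is in place the remainder is routine translation of the matrix-coefficient obstruction into a statement about finite versus infinite orbits, and no appeal to the Bartholdi--Grigorchuk irreducibility result from \cite{BG} is actually needed.
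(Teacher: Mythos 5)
Your proposal is correct, and it reaches the theorem by a route that differs from the paper's in two respects. First, the deduction of the theorem from the key orbit statement: the paper invokes Mackey's criterion (Theorem \ref{ThMackey}, part 2), which requires knowing in addition that $\comm_G(\st_G(x))=\st_G(x)$ (Proposition \ref{PropCommSt}, imported from \cite{BG}); you instead run the elementary matrix-coefficient argument directly, noting that $U\delta_x$ would be a unit vector in $l^2(Gy)$ invariant under $\rho_y(\st_G(x))$ and hence supported on the finite $\st_G(x)$-orbits --- this bypasses the commensurator input entirely and uses only the ``easy direction'' of Mackey's analysis. Second, the key lemma itself (every $\st_G(x)$-orbit on $Gy$ is infinite, equivalently the non-quasi-conjugacy of the stabilizers in the paper's Lemma \ref{LmQC}) is proved differently: the paper takes a nontrivial $g_k\in\rist_{v_k}(G)$, finds a vertex $u$ that $g_k$ moves, and uses level transitivity to conjugate $g_k$ so that the moved vertex lands on the path of the target point; you instead argue by contradiction that if $\rist_{v_k}(G)$ fixed $z$, then normality of $\rist_{v_k}(G)$ in $\st_G(v_k)$, level transitivity of $\st_G(v_k)$ on $T_{v_k}$, and closedness of fixed-point sets would force $\rist_{v_k}(G)$ to fix all of $\partial T_{v_k}$ and hence be trivial. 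Both arguments are sound (your accumulation-at-$z$ step correctly yields infinitely many distinct orbit points, since finitely many values bounded away from $z$ cannot converge to $z$); the paper's version is more constructive and hands you an explicit element moving the point, while yours is softer but self-contained and, combined with the direct intertwiner argument, makes the theorem independent of the irreducibility machinery of \cite{BG}. One cosmetic remark: the metric bound $d(g_kz,z)\leqslant 2^{-k}$ should be adapted to a general spherically homogeneous tree (agreement of the two paths up to level $k$ is all that is needed), but this does not affect the argument.
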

\noindent In particular, we obtain a continuum of pairwise disjoint (we will use the word "disjoint" as a synonym to "not unitary equivalent") irreducible representations of a weakly branch group $G$. Using Theorem \ref{ThQRDist} we show the following:
 \begin{Th}\label{ThCentralizer} Let $G$ be a countable weakly branch group acting on a spherically homogeneous rooted tree $T$. Then the centralizer of $G$ in the group $\bij(\partial T)$ of all bijections of the boundary of $T$ onto itself is trivial.
 \end{Th}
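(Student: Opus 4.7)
The strategy is to leverage Theorem \ref{ThQRDist} together with the Bartholdi--Grigorchuk irreducibility result (cited in the introduction) to pin $\sigma$ down first on orbits, then pointwise.

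Let $\sigma\in\bij(\partial T)$ commute with every element of $G$. For any $x\in\partial T$, the relation $\sigma(gx)=g\sigma(x)$ says that $\sigma$ restricts to a $G$-equivariant bijection $Gx\to G\sigma(x)$. Extending this bijection linearly, the formula $U\delta_y=\delta_{\sigma(y)}$ defines a unitary $U:\ell^2(Gx)\to\ell^2(G\sigma(x))$ satisfying $U\rho_x(g)=\rho_{\sigma(x)}(g)U$ for all $g\in G$. Hence $\rho_x$ and $\rho_{\sigma(x)}$ are unitarily equivalent. By the contrapositive of Theorem \ref{ThQRDist}, the points $x$ and $\sigma(x)$ must lie in the same $G$-orbit; equivalently, $\sigma$ preserves every $G$-orbit on $\partial T$.

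Now fix an orbit $Gx$. The restriction $U:\ell^2(Gx)\to\ell^2(Gx)$ is a self-intertwiner of the quasi-regular representation $\rho_x$, which is irreducible by the Bartholdi--Grigorchuk result for weakly branch groups. Schur's lemma gives $U=c\cdot\id$ for some $c\in\mathbb{C}$ with $|c|=1$. On the other hand $U$ is a permutation of the orthonormal basis $\{\delta_y\}_{y\in Gx}$, and the only permutation unitary proportional to the identity is the identity itself, forcing $c=1$ and $\sigma(y)=y$ for every $y\in Gx$. Since $x$ was arbitrary, $\sigma=\id$.

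The essentially only step requiring care is the translation of the purely set-theoretic data of a centralizing bijection $\sigma$ into representation-theoretic data, namely the intertwining unitary $U$; once this translation is in place, Theorem \ref{ThQRDist} controls the orbit structure and Schur's lemma finishes the job. I anticipate no serious obstacle, as Theorem \ref{ThQRDist} is doing the heavy lifting.
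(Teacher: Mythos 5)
Your proposal is correct and follows essentially the same route as the paper: the centralizing bijection is turned into a unitary intertwiner of quasi-regular representations, Theorem \ref{ThQRDist} rules out the case where $\sigma(x)$ lands in a different orbit, and irreducibility of $\rho_x$ plus Schur's lemma rules out a nontrivial permutation of a single orbit. The paper merely organizes the same two steps as a case analysis in a proof by contradiction.
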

 \noindent This gives triviality of centralizers of $G$ in such groups as $\homeo(\partial T)$, $\aut(\partial T)$ \etc\, (see Corollary \ref{CoCentr}). As the result, we obtain a new example of a dynamical system for which the generalized Ismagilov's conjecture fails (see \cite{Kos10}).

  Another important family of representations gives the Koopman representation $\kappa$ associated to  a dynamical system $(G,X,\mu)$, where $(X,\mu)$ is a measure space on which $G$ acts by measure class preserving transformations.
 Such representations are rarely irreducible. In the case when $\mu$ is a $G$-invariant probability measure the subspace of constant functions in $L^2(X,\mu)$ is $\kappa(G)$-invariant. And moreover, the restriction $\kappa_0$ of $\kappa$ on the subspace  $L^2_0(X,\mu)\subset L^2(X,\mu)$ of functions with zero integral (orthogonal complement to constant functions) is also usually reducible. A few known exceptions are listed in \cite{Glas03}. Recently, attention to irreducibility problem of representation $\kappa_0$ was raised by Vershik in \cite{V11}.  One of the results of the present paper is constructing new examples of irreducible Koopman representations corresponding to actions with quasi-invariant measures.

 For a $d$-regular rooted tree $T$ its boundary $\partial T$ can be identified with a space of sequences $\{x_j\}_{j\in\mathbb N}$ where $x_j\in\{1,\ldots,d\}$. Let
  \begin{equation}\label{EqMP}\mathcal P=\{p=(p_1,p_2,\ldots,p_d):p_i> 0\;\;\text{for}\;\;i=1,2,\ldots,d\;\;\text{and}\;\;\sum\limits_{i=1}^dp_i=1\}\end{equation} be the set of all probability distributions on the alphabet $\{1,2,\ldots,d\}$ assigning positive probability to every letter and \begin{equation}\label{EqMP*}\mathcal P^{*}=\{p\in\mathcal P:p_i\neq p_j\;\;\text{for all}\;\;1\leqslant i< j\leqslant d\}.\end{equation} For $p\in\mathcal P$ denote by $\mu_p=\prod\limits_{\mathbb N}p$ the corresponding Bernoulli measure on $\partial T$.  Our main result is:
 \begin{Th}\label{ThBranchIrred} Let  $G$ be a subexponentially bounded countable weakly branch group acting on a regular rooted tree and $p\in\mathcal P^{*}$. Then the following holds: \begin{itemize}
 \item[$1)$] the Koopman representation $\kappa_p$ associated to the action of $G$ on  $(\partial T,\mu_p)$ is irreducible;
 \item[$2)$] this representation is not unitary equivalent to any of the quasi-regular representations $\rho_x,x\in\partial T$;
 \item[$3)$] Koopman representations associated to different $p\in\mathcal P^{*}$ are pairwise disjoint.
 \end{itemize}
\end{Th}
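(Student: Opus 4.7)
My overall plan is to treat all three claims using two common ingredients: the recursive weakly branch structure (rigid stabilizers $\rist_G(v)$ act again as weakly branch groups on the subtrees $T_v$) and the precise form of the Radon-Nikodym cocycle of $\mu_p$, which is a multiplicative cocycle whose local factors are the coordinates $p_i$.

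For part $1)$, irreducibility, the strategy is to prove that $\kappa_p(G)''$ coincides with all of $B(L^2(\partial T,\mu_p))$. I would first establish the key inclusion $L^\infty(\partial T,\mu_p)\subseteq\kappa_p(G)''$ (where $L^\infty$ acts by multiplication). The plan is recursive along the tree: at each vertex $v$, use nontrivial elements from $\rist_G(v)$ (which are available since $G$ is weakly branch and which act as weakly branch groups on $T_v$) to build, as an SOT limit of polynomial combinations of $\kappa_p(\rist_G(v))$, multiplication by the characteristic function $\mathbf{1}_{C_v}$ of the cylinder at $v$. The subexponentially bounded hypothesis enters here, to control the growth of the supports of the approximants so that the recursive construction converges. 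Once $L^\infty\subseteq\kappa_p(G)''$ is in hand, the commutant lies inside $L^\infty$; any element of it is then a $G$-invariant bounded function, hence $\mu_p$-a.e. constant by ergodicity of $\mu_p$ under the level-transitive weakly branch action (via the tail $0$-$1$ law).

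For part $2)$, I would argue by contradiction. If $\kappa_p\cong\rho_x$, the image of the cyclic vector $\delta_{eH}\in l^2(G/H)$ (with $H=\st_G(x)$) produces a unit vector $\xi\in L^2(\partial T,\mu_p)$ satisfying $\kappa_p(g)\xi=\xi$ precisely for $g\in H$ and with $\{\kappa_p(g)\xi\}_{gH\in G/H}$ an orthonormal system. Since $\kappa_p(g)\xi(y)=\sqrt{\dd(g_*\mu_p)/\dd\mu_p(y)}\,\xi(g^{-1}y)$, the $H$-invariance places strong constraints on $|\xi|^2$, and the countable orthogonality of the orbit of $\xi$ against the continuous measure $\mu_p$, combined with the weakly branch dynamics on $\partial T$, must yield a contradiction; Theorem \ref{ThCentralizer} is also available as a shortcut to rule out the needed symmetry.

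For part $3)$, assume $\kappa_p\cong\kappa_q$ with distinct $p,q\in\mathcal P^{*}$. By $1)$ both are irreducible, hence each is determined up to equivalence by any cyclic matrix coefficient. Comparing the coefficient of the canonical cyclic vector,
\[
\phi_p(g)=\langle\kappa_p(g)\mathbf{1},\mathbf{1}\rangle=\int_{\partial T}\sqrt{\dd(g_*\mu_p)/\dd\mu_p}\;\dd\mu_p,
\]
with the corresponding invariant for $\kappa_q$, and exploiting that the $\mathcal P^{*}$ condition (all $p_i$ distinct) together with the weakly branch hypothesis provides group elements whose cocycles isolate individual coordinates $p_i$, forces $p=q$. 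The main obstacle of the proof is the recursive construction in $1)$: carrying out the SOT approximation of multiplication by $\mathbf{1}_{C_v}$ while controlling the support using the subexponential bound is the technical heart of the argument, after which parts $2)$ and $3)$ reduce to the matrix-coefficient analysis sketched above.
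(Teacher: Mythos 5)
Your plan for part $1)$ has the right target --- showing $L^\infty(\partial T,\mu_p)\subseteq\kappa_p(G)''$ and then invoking ergodicity --- and this is indeed the paper's strategy. But the actual mechanism of the approximation is missing, and what you do say misplaces the hypotheses. The paper does not build multiplication operators by ad hoc polynomial combinations; it observes (a bicommutant argument) that for any open $A$ the orthogonal projection $P_A$ onto the subspace of vectors fixed by $G_A=\{g:\supp(g)\subset A\}$ automatically lies in $\mathcal M_{\kappa_p}$, and the whole technical work goes into identifying that fixed subspace as $\{\eta:\supp(\eta)\subset X\setminus A\}$. This identification rests on two quantitative ingredients you do not supply: elements $g$ supported in $A$ with $N_g(x)\geqslant k$ on most of $A$ (Proposition \ref{PropBrAbsNonfree} and Corollary \ref{CoNg}, which use weak branchness via a combinatorial lemma), and the decay estimate $(\kappa_p(g)\xi_A,\xi_A)\leqslant\gamma(p)^k\mu_p(A)$ with $\gamma(p)=2\sqrt a/(a+1)<1$. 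The constant $\gamma(p)$ is $<1$ precisely because the $p_i$ are pairwise distinct --- this is where $p\in\mathcal P^{*}$ enters, and it must enter somewhere, since for the uniform distribution the conclusion is false ($\kappa_u$ decomposes into finite-dimensional pieces). Your sketch never uses $p\in\mathcal P^{*}$ in part $1)$. Likewise, the subexponential bound is used to prove quasi-invariance and ergodicity of $\mu_p$ (Propositions \ref{PropPolBound} and \ref{PropErg}), not to ``control the growth of the supports of the approximants.''

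Parts $2)$ and $3)$ as sketched do not close. In $2)$ you list constraints on $|\xi|^2$ and assert they ``must yield a contradiction''; that is the entire content of the claim, and Theorem \ref{ThCentralizer} does not substitute for it. The paper's argument is concrete: take open covers $A_n$ of the countable orbit $Gx$ with $\mu_p(A_n)\to 0$; then $P_{A_n}\to\id$ on the Koopman side, while on the quasi-regular side the corresponding projection is $0$ because $G_{A_n}$ has infinite orbits on $Gx$ (weak branchness), so no intertwiner can exist. In $3)$ your key step is logically flawed: two irreducible representations being equivalent does \emph{not} force the diagonal matrix coefficients of two \emph{chosen} cyclic vectors (here $\mathbf 1$ in each space) to coincide --- the intertwiner need not map $\mathbf 1$ to $\mathbf 1$ --- so showing $\phi_p\neq\phi_q$ proves nothing. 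The paper instead uses that distinct Bernoulli measures $\mu_p\perp\mu_{\tilde p}$ are mutually singular, picks open covers $A_n$ of a set null for $\mu_p$ and conull for $\mu_{\tilde p}$, and derives a contradiction from the behavior of the projections $P_{A_n}$ under the putative intertwiner.
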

\noindent Here subexponentially bounded group means a group consisting of subexponentially bounded  automorphisms of $T$. The precise definition will be given in Subsection \ref{SubsecBranch}. Notice that Theorem \ref{ThBranchIrred} gives an additional continuum of pairwise disjoint irreducible representations of a weakly branch group. These representations are faithful ($\kappa_p(g)\neq\id$ if $g$ is not the group unit). Observe also that for the uniform distribution $u=(\tfrac{1}{d},\tfrac{1}{d},\ldots,\tfrac{1}{d})\in\mathcal P$ the corresponding Bernoulli measure $\mu_u$ is $G$-invariant and the Koopman representation $\kappa_u$ is a direct sum of countably many finite-dimensional representations (see \cite{BG}). In fact, the authors see a possibility of generalizing  Theorem 3 to the case of distributions $p\in \mathcal P\setminus \{u\}$ (\ie such that $p_i\neq p_j$ for some $1\leqslant i<j\leqslant d$). However, since this leads to further complications of already difficult proof in this paper we focus on distributions from $\mathcal P^{*}$.


\section{Preliminaries.}
In this section we give necessary preliminaries on groups acting on rooted trees, representation theory and related topics.

\subsection{Weakly branch groups.}\label{SubsecBranch}
Here we give a brief introduction to actions of groups on boundaries of rooted trees. We refer the reader to \cite{Grig11} for detailed definitions and properties of these actions.

A rooted tree is a tree $T$, with vertex set divided into levels $V_n$, $n\in\mathbb Z_+$, such that $V_0$ consists of one vertex $v_0$ (called the root of $T$), the edges are only between consecutive levels, and each vertex from $V_n$, $n\geqslant 1$ (we consider infinite trees), is connected by an edge to exactly one vertex from $V_{n-1}$ (and several vertices from $V_{n+1}$). A rooted tree is called spherically homogeneous if each vertex from $V_n$ connected to the same number $d_n$ of vertices from $V_{n+1}$. $T$ is called $d-$regular, if $d_n=d$ is the same for all levels.

 An automorphism of a rooted tree $T$ is any automorphism of the graph $T$ preserving the root. The boundary $\partial T$ of $T$ is the set of all infinite paths starting at $v_0$ and passing through each level exactly one time. For a vertex $v$ of $T$  denote by $\partial T_v\subset\partial T$ the set of paths passing through $v$. Supply $\partial T$ by the topology generated by the sets $\partial T_v$. Automorphisms of $T$ act naturally on $\partial T$ by homeomorphisms. If $T$ is spherically homogeneous then $\partial T$ admits a unique $\aut(T)$-invariant measure $\mu$. This measure is uniform in the sense that
 $$\mu(\partial T_v)=\frac{1}{d_0d_1\ldots d_{n-1}}\;\;\text{for any}\;\;n\;\;\text{and any}\;\;v\in V_n.$$ Grigorchuk, Nekrashevich and Suschanski showed that this measure is ergodic if and only if the action of $G$ is transitive on each level $V_n$ of $T$ (level transitive). Moreover, in this case it is uniquely ergodic. We refer the reader to \cite{GNS00}, Proposition 6.5 for details.
\begin{Def}\label{DefBr} Let $T$ be a spherically homogeneous tree and $G<\aut(T)$. Rigid stabilizer of a vertex $v$  is the subgroup $\rist_v(G)=\{g\in G:\supp(g)\subset T_v\}$. Rigid stabilizer of level $n$ is
$$\rist_n(G)=\prod\limits_{v\in V_n}\rist_v(G).$$ $G$ is called \emph{branch} if it is transitive on each level and $\rist_n(G)$ is a subgroup of finite index in $G$ for all $n$. $G$ is called \emph{weakly branch} if it is transitive on each level $V_n$ of $T$ and $\rist_v(G)$ is nontrivial for each $v$.
\end{Def}

For each level $V_n$ of a binary rooted tree an automorphism $g$ of $T$ can be presented in the form
$$g=\sigma\cdot(g_1,\ldots,g_{d^n}),$$ where $\sigma\in\sym(V_n)$ is a permutation of the vertices from $V_n$ and $g_i$ are the restrictions of $g$ on the subtrees emerging from the vertices of $V_n$.
 \begin{Def} An element $g\in\aut(T)$ is polynomially bounded (in the sense of Sidki \cite{Sid00}) if the number $k_n(g)$ of restrictions $g_i$ to the vertices of level $n$ not equal to identity automorphism is bounded by a polynomial of $n$. We will call $g$ subexponentially bounded if for every $0<\gamma<1$ one has
 $$\lim\limits_{n\to\infty}k_n(g)\gamma^n=0.$$ A group $G<\aut(T)$ is polynomially (subexponentially) bounded if each $g\in G$ is polynomially (subexponentially) bounded. \end{Def}\noindent R. Kravchenko showed (see \cite{Kr}) that for any polynomially bounded  automorphism $g$ defined by a finite automaton and any $p\in\mathcal P$ (see \eqref{EqMP}) the measure $\mu_p$ is quasi-invariant with respect to the action of $g$. We will show in Section \ref{SecIrredKoop} that in fact the condition that $g$ is defined by a finite automaton is redundant and polynomial boundedness can be weakened to subexponential.

\subsection{Quasi-regular representations.}\label{SubsecQR}
 Given a countable group acting on a set $X$ and a point $x\in X$ one can define the quasi-regular representation $\rho_x$ in $l^2(Gx)$, where $Gx$ is the orbit of $x$,  by:
$$(\rho_x(g)f)(y)=f(g^{-1}y).$$ Notice that the isomorphism class of $\rho_x$ depends only on the stabilizer $\st_G(x)$ of $x$.

Recall that two subgroups $H_1,H_2$ of a group $G$ are called \emph{commensurable} if $H_1\cap H_2$
is of finite index in both $H_1$ and $H_2$. The groups $H_1$ and $H_2$ are called \emph{quasi-conjugate} in
$G$ if $gH_1g^{-1}$ is commensurable to $H_2$ for some $g\in G$. By definition,
\emph{commensurator} of $H<G$ is the subgroup of $G$ defined by
$$\comm_G(H)=\{g\in G:H\cap gHg^{-1}\text{ has finite index in }H\text{ and }gHg^{-1}\}.$$ Mackey proved the following (see \cite{Mack}, Corollary 7):
\begin{Th}\label{ThMackey} $1)$ Let $H$ be a subgroup of an infinite discrete countable group $G$. Then
the quasi-regular representation $\rho_{G/H}$ is irreducible if and only if $\comm_G(H) = H$.\\
$2)$ Let $H_1,H_2$ be two subgroups of $G$ such that $\comm_G(H_i)=H_i$, $i=1,2$.
Then $\rho_{G/H_1}$ and $\rho_{G/H_2}$ are unitary equivalent if and only if
$H_1$ and $H_2$ are quasi-conjugate.
\end{Th}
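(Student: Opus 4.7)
The plan is to analyze bounded intertwining operators between quasi-regular representations through their integral kernels, exploiting the bijection between the diagonal $G$-orbits on $G/H_1 \times G/H_2$ and the double coset space $H_2\backslash G/H_1$. Writing $\rho_{G/H} = \mathrm{Ind}_H^G \mathbf{1}_H$, any bounded operator $T : \ell^2(G/H_1) \to \ell^2(G/H_2)$ intertwining the two representations corresponds to a matrix kernel $k(yH_2, xH_1) = \langle T\delta_{xH_1}, \delta_{yH_2}\rangle$ satisfying the invariance $k(gyH_2, gxH_1) = k(yH_2, xH_1)$ for all $g\in G$. Hence $k$ descends to a function on the double coset space, and the problem reduces to determining which double cosets can support a bounded intertwiner.

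For part $1)$ I specialize to $H_1 = H_2 = H$ and, for each double coset $D = HgH$, consider the candidate kernel $K_D$ equal to $1$ on the corresponding $G$-orbit in $G/H\times G/H$ and $0$ elsewhere. A direct computation gives
\[
\|K_D \delta_{eH}\|^2 \;=\; \#\{yH : y \in HgH\} \;=\; [H : H \cap gHg^{-1}],
\]
and evaluating the adjoint on $\delta_{eH}$ similarly yields the requirement $[H : g^{-1}Hg \cap H] < \infty$. Both finiteness conditions together are precisely $g \in \comm_G(H)$, and under them $K_D$ extends by $G$-equivariance to a bounded operator in the commutant of $\rho_{G/H}$. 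Conversely, arbitrary elements of the commutant are generated (as a von Neumann algebra) by the $K_D$ with $D\subset \comm_G(H)$. Thus scalars exhaust the commutant iff only $D = H$ contributes, i.e., iff $\comm_G(H) = H$, proving the Schur-irreducibility criterion.

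For part $2)$ the identical orbit--kernel analysis applies to an intertwiner between $\rho_{G/H_1}$ and $\rho_{G/H_2}$. A bounded kernel supported on a double coset $H_2 g H_1$ forces $[H_2 : H_2 \cap gH_1g^{-1}] < \infty$ and $[gH_1g^{-1} : H_2 \cap gH_1g^{-1}] < \infty$, which is exactly the commensurability of $gH_1g^{-1}$ with $H_2$, i.e., quasi-conjugacy of $H_1$ and $H_2$ via $g$. Conversely, given such $g$, the indicator kernel on the corresponding orbit produces a nonzero bounded intertwiner; by Schur's lemma applied to the irreducible representations (using part $1)$ together with the hypothesis $\comm_G(H_i) = H_i$), a suitable scalar multiple of this operator is unitary. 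The "only if" direction follows because a unitary intertwiner must be nonzero on at least one orbit.

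The main obstacle is the analytical one of extending a formal kernel defined on a single $G$-orbit to a genuinely bounded operator on $\ell^2(G/H)$ when $H$ has infinite index in $G$; classical Mackey theory (in its intertwining-number form) was originally developed for finite-index or compact-subgroup settings, and here Frobenius reciprocity is not directly available. The commensurator condition is precisely the finiteness hypothesis that turns the formal double-coset sum into a convergent bounded operator, and carefully verifying this extension (via the explicit index computations above and a uniform bound on row and column sums of the kernel) is the crux of the argument, as carried out in Mackey's original work \cite{Mack}.
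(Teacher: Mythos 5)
The paper does not prove this statement at all: it is quoted verbatim from Mackey (the text cites \cite{Mack}, Corollary 7) and then used as a black box, so there is no in-paper proof to compare against. Your argument is the standard one (the one Mackey, and later Burger--de la Harpe and others, give): intertwiners have kernels constant on diagonal $G$-orbits of $G/H_1\times G/H_2$, i.e.\ on double cosets, square-summability of a single column and of a column of the adjoint forces the two index-finiteness conditions $[H:H\cap gHg^{-1}]<\infty$ and $[H:H\cap g^{-1}Hg]<\infty$ (hence support inside the commensurator, resp.\ quasi-conjugacy), and conversely Schur's test on the $0$--$1$ kernel of a single admissible double coset produces a bounded non-scalar commutant element or a nonzero intertwiner, which Schur's lemma upgrades to a unitary in part $2)$. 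This is correct in substance. One sentence is looser than it needs to be: you do not need (and should not casually assert) that the commutant is \emph{generated} as a von Neumann algebra by the operators $K_D$; for the ``$\comm_G(H)=H\Rightarrow$ irreducible'' direction it suffices that any $T$ in the commutant has a double-coset-invariant kernel whose columns lie in $\ell^2$, which kills every off-diagonal coefficient and leaves a scalar. With that minor tightening your proof is a complete and faithful reconstruction of the result the paper imports.
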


In \cite{BG} the authors showed the following:
\begin{Prop}\label{PropCommSt} Let $G$ be a weakly branch group, $T$ be the corresponding spherically homogeneous rooted tree, $x\in\partial T$ and $\st_G(x)$ be its stabilizer. Then $\comm_G(\st_G(x))=\st_G(x)$.
\end{Prop}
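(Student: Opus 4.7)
The inclusion $\st_G(x) \subseteq \comm_G(\st_G(x))$ is automatic, so the content is the reverse containment. I will prove the contrapositive: if $g \in G$ with $gx \neq x$, then $\st_G(x) \cap g\,\st_G(x)\,g^{-1}$ has infinite index in $\st_G(x)$. The key algebraic identity is $g\,\st_G(x)\,g^{-1} = \st_G(gx)$, so the intersection equals the joint stabilizer $\st_G(\{x,gx\})$, and by orbit--stabilizer
\[
\bigl[\st_G(x) : \st_G(x) \cap \st_G(gx)\bigr] \;=\; \bigl|\st_G(x)\cdot gx\bigr|.
\]
Thus the entire proposition reduces to the following statement: \emph{for any two distinct boundary points $x, y \in \partial T$, the $\st_G(x)$-orbit of $y$ is infinite.}

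Next, I exploit the weak branch hypothesis to produce elements of $\st_G(x)$ that move $y$. Let $n$ be the first level at which the paths $x$ and $y$ disagree, and set $v := y_n$, so $x$ does not pass through $v$. Every element of $\rist_v(G)$ is supported on $T_v$ and hence fixes $x$; that is, $\rist_v(G) \subseteq \st_G(x)$. It therefore suffices to show that the $\rist_v(G)$-orbit of $y$ is infinite. My strategy is to construct, for each $m > n$, a nontrivial element $h_m \in \rist_v(G)$ such that $h_m(y)$ and $y$ agree on levels $\leqslant m-1$ but differ at some level $\geqslant m$. Then the paths $\{h_m(y)\}_{m>n}$ are pairwise distinct (distinguished by the first level at which they deviate from $y$), yielding an infinite subset of the orbit.

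The construction of $h_m$ draws on the nontriviality of $\rist_w(G)$ at every vertex $w$. Choosing $h_m \in \rist_{y_{m-1}}(G) \setminus \{e\}$, such an element is automatically supported on $T_{y_{m-1}}$ and fixes $y_0, \ldots, y_{m-1}$; if it fails to fix $y$ entirely, we are done. The main obstacle is the possibility that every nontrivial element of $\rist_{y_{m-1}}(G)$ happens to also fix the tail $y_m, y_{m+1}, \ldots$, and hence fixes $y$ as a path, for all large $m$. To exclude this scenario I will combine the weak branch assumption with the level transitivity of $G$ on each $V_m$: level transitivity permits conjugation of $\rist_{y_m}(G)$ onto the rigid stabilizer of a sibling vertex, and mixing the resulting element with members of $\rist_v(G)$ yields a genuine move of $y$ inside $\rist_v(G)$. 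Equivalently, one argues by contradiction that if the $\rist_v(G)$-orbit of $y$ were finite, then at a level $N$ large enough to separate the orbit, $\rist_{y_N}(G)$ would have to lie entirely in $\st_G(y)$ (by the permutation action on the finite orbit), and this collapse of a rigid stabilizer into a point stabilizer is incompatible with the combination of weak branchness and level transitivity. Carrying out this last step cleanly is the delicate heart of the proof.
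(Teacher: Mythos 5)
Your reduction is correct and is the standard route to this statement: $\st_G(x)\subseteq\comm_G(\st_G(x))$ is automatic, $g\,\st_G(x)\,g^{-1}=\st_G(gx)$, and by orbit--stabilizer everything reduces to showing that for $y\neq x$ the orbit $\st_G(x)\cdot y$ is infinite; passing to $\rist_v(G)\subseteq\st_G(x)$, where $v=y_n$ lies just below the first level of disagreement of $x$ and $y$, is also the right move. (The paper does not actually prove this Proposition --- it quotes it from \cite{BG} --- but its proof of Lemma~\ref{LmQC} runs precisely this kind of argument for $x,y$ in disjoint orbits.) However, the step you yourself flag as ``the delicate heart of the proof'' is genuinely missing, and your sketch of it (conjugating onto the rigid stabilizer of a \emph{sibling} vertex and ``mixing'') does not identify the mechanism that actually closes the argument. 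The missing idea is the following. By weak branchness take any $g\neq\id$ with $\supp(g)\subset T_{y_k}$; then $g$ moves some vertex $u\in V_m$ below $y_k$, for some $m>k$. By level transitivity choose $h\in G$ with $hu=y_m$. Since $h$ carries the level-$k$ ancestor of $u$, which is $y_k$, to the level-$k$ ancestor of $y_m$, which is again $y_k$, the conjugate $hgh^{-1}$ is still supported in $T_{y_k}$, hence still lies in $\rist_{y_k}(G)\subseteq\rist_v(G)\subseteq\st_G(x)$, yet it moves $y_m$ and therefore moves $y$. This single observation defeats the ``bad scenario'' you describe (every nontrivial element of $\rist_{y_{m-1}}(G)$ fixing $y$), and without it your proof does not close; the contradiction version you outline (a finite orbit forces $\rist_{y_N}(G)\subseteq\st_G(y)$ for large $N$) is a sound setup but runs into exactly the same unproved claim.

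A smaller imprecision: the points $h_m(y)$ are not automatically pairwise distinct just because $h_m(y)$ agrees with $y$ through level $m-1$ and deviates somewhere at level $\geqslant m$; two of them could deviate identically. You need to extract a subsequence $m_1<m_2<\cdots$ inductively so that $h_{m_l}(y)$ already leaves the path of $y$ before level $m_{l+1}$; then for $l<l'$ the point $h_{m_{l'}}(y)$ still passes through the vertex of $y$ at the level where $h_{m_l}(y)$ has departed, so the two differ. This is exactly the device used at the end of the paper's proof of Lemma~\ref{LmQC}.
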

\noindent Theorem \ref{ThMackey} together with Proposition \ref{PropCommSt} immediately imply:
\begin{Co}\label{CoQRIrred} For a weakly branch group $G$ and any $x\in\partial T$ the quasi-regular representation $\rho_x$ is irreducible.
\end{Co}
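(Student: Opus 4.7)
The plan is to deduce this essentially in one step, since the serious content has already been packaged in the two preceding results. Specifically, my strategy is to recognize $\rho_x$ as a quasi-regular representation $\rho_{G/H}$ with $H = \st_G(x)$, and then combine Proposition \ref{PropCommSt} with part (1) of Mackey's theorem (Theorem \ref{ThMackey}).

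In detail, I would first observe that by the very definition of $\rho_x$ given in Subsection \ref{SubsecQR}, we have $\rho_x \cong \rho_{G/\st_G(x)}$, since the isomorphism class of $\rho_x$ depends only on $\st_G(x)$. Next, I would verify the hypotheses of Mackey's criterion: $G$ is countable by the standing assumption, and it is infinite because a weakly branch group acts level-transitively on an infinite tree (the levels $V_n$ grow without bound whenever $G$ has nontrivial rigid vertex stabilizers, so $G$ cannot be finite). Thus Theorem \ref{ThMackey}(1) applies to $H = \st_G(x)$: the representation $\rho_{G/\st_G(x)}$ is irreducible if and only if $\comm_G(\st_G(x)) = \st_G(x)$.

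Finally, I would invoke Proposition \ref{PropCommSt}, which gives exactly the equality $\comm_G(\st_G(x)) = \st_G(x)$ for the stabilizer of any point $x \in \partial T$ in a weakly branch group. Combining these two facts yields the irreducibility of $\rho_x$.

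There is no real obstacle here: all of the work has been pushed into Proposition \ref{PropCommSt}, whose proof from \cite{BG} uses the weakly branch hypothesis to produce, for any $g \notin \st_G(x)$, enough elements supported on subtrees avoiding $x$ and moved by conjugation by $g$ to witness that $\st_G(x) \cap g\st_G(x)g^{-1}$ has infinite index in $\st_G(x)$. If anything, the only thing to be careful about in writing the corollary is not to cite Mackey's theorem in a form that requires $H$ to have infinite index in $G$ without remarking that this holds here (it does, since $Gx$ is infinite by level-transitivity on an infinite tree), but this is a routine verification rather than a genuine difficulty.
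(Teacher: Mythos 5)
Your proof is correct and follows exactly the paper's route: the corollary is stated there as an immediate consequence of Mackey's criterion (Theorem \ref{ThMackey}, part 1) combined with Proposition \ref{PropCommSt}. The extra remarks you make about $G$ being infinite and $\st_G(x)$ having infinite index are harmless routine checks that the paper leaves implicit.
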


\subsection{Koopman representation.}\label{SubsecKoopBranch}
The most natural representations that one can associate to a measure-preserving action of a group $G$ on a measure space $(X,\mu)$, where $\mu$ is a probabilty measure, is the Koopman representation $\kappa$ of $G$ in $L^2(X,\mu)$ acting by:$$(\kappa(g)f)(x)=f(g^{-1}x).$$ This representation is important due to the fact that the spectral properties of $\kappa$ reflect the dynamical properties of the action such as ergodicity and weak-mixing.

 It is known that for an ergodic action operators $\kappa(g)$ together with operators of multiplication by functions from $L^\infty(X,\mu)$ generate in the weak operator topology the algebra of all bounded operators on $L^2(X,\mu)$. The representation $\kappa$ has invariant subspace of constant functions on $X$.  However, it is natural to ask whether $\kappa$ is irreducible in the orthogonal complement of the constant functions in $L^2(X,\mu)$ ($\kappa$ is called \emph{almost
 irreducible} in this case). This question is discussed in E. Glasner's book \cite{Glas03} and is again raised in Vershik's paper \cite{V11} (Problem 4). The cases when the answer is positive are quite rare. Among few examples is the example of arbitrary dense subgroup of the group $\aut([0,1],\mu)$ of all measure preserving automorphisms of the unit segment with Lebesgue measure $\mu$, supplied by the weak topology (see \cite{Glas03}).

More generally, if the measure $\mu$ is only quasi-invariant one can still define the Koopman type representation using Radon-Nikodim derivative:
$$(\kappa(g)f)(x)=\sqrt{\frac{\dd\mu(g^{-1}(x))}{\dd\mu(x)}}f(g^{-1}x).$$  If the measure $\mu$ is not invariant then constant functions do not form an invariant subspace and Koopman representation can be irreducible in the whole $L^2(X,\mu)$. There are several examples of group actions with quasi-invariant measures known for which the Koopman representation is irreducible:
\begin{itemize}
\item[$1)$] actions of free non-commutative  groups on their boundaries (\cite{FTP83}, \cite{FTS94} and \cite{KuSt});
\item[$2)$] actions of lattices of Lie-groups (or algebraic-groups) on their Poisson-Furstenberg boundaries (\cite{CS91} and \cite{BC02});
\item[$3)$] action of the fundamental group of a compact negatively curved manifold on its boundary endowed with the Paterson-Sullivan measure class (\cite{BM11});
\item[$4)$] canonical actions of Higman-Thompson groups on segments endowed with Lebesgue measure (\cite{Garn12} and \cite{D15 Thompson}).
\end{itemize}
However, the general case is not well understood and search for new examples is a challenging problem. In the present paper we construct a new class of examples of irreducible Koopman representations (see Theorem \ref{ThBranchIrred}).

Observe that for any spherically homogeneous rooted tree and any $G<\aut(T)$ the Koopman representation of $G$ corresponding to the invariant probability measure $\mu$ on $\partial T$ is a direct sum of countably many finite dimensional irreducible subrepresentations, since for each $n$ the finite-dimensional subspace of functions constant on the subtrees emerging from vertices from $V_n$ is invariant under $G$ (see \cite{BG}).
Recall that for two groups $G<H$ the centralizer of $G$ in $H$ is defined by:
$$C_H(G)=\{h\in H:hg=gh\;\;\text{for all}\;\;g\in G\}.$$
Notice that for any group $G$ acting on a rooted tree $T$ we have the following group inclusions:
\begin{align}\label{EqGrIncl}
G<\aut(T)<\aut(\partial T,\mu)<\widetilde{\aut}(\partial T,\mu)\;\;\text{and}\\ \aut(T)<\homeo(\partial T)<\aut(\partial T)<\bij(T),\end{align}
where $\homeo(T)$ is the group of all homeomorphism of $\partial T$ onto itself, $\widetilde{\aut}(\partial T,\mu)$ is the group of all measure class preserving automorphisms of $(\partial T,\mu)$, $\aut(\partial T)$ is the group of all Borel automorphisms of $\partial T$ and $\bij(T)$ is the group of all bijections of $\partial T$ onto itself.
One of the results of the present paper (Theorem \ref{ThCentralizer}) is related to the question formulated by Kosyak (see \cite{Kos10}, Conjecture 0.0.1). For a measure space $(X,\nu)$ and a measure class preserving map $f:X\to X$ denote by $f_*(\nu)$ the push-forward measure on $X$. We will use the symbol $\id$ for the identity operator.
\begin{Qu}[Kosyak]\label{ConjKosyak} In which cases for a measure preserving dynamical system $(G,X,\nu)$ irreducibility of the associated Koopman representation $\kappa$  is equivalent to the following two conditions:\\
$1)$ $f_*(\nu)\perp\nu$ for any $f\in C_{\widetilde{\aut}(X,\nu)}(G),f\neq\id$;\\
$2)$ $\nu$ is $G$-ergodic.
\end{Qu}
 As a Corollary of Theorem \ref{ThCentralizer} we obtain:
 \begin{Co}\label{CoCentr} Let $G$ be a weakly branch group acting on a spherically homogeneous rooted tree $T$. Then the centralizers of $G$ in the groups $\aut(T),\homeo(\partial T),\aut(\partial T,\mu),\widetilde{\aut}(\partial T,\mu)$ and $\aut(\partial T)$ are trivial.
 \end{Co}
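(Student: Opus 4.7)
The plan is to deduce all five assertions from Theorem \ref{ThCentralizer}, which states $C_{\bij(\partial T)}(G) = \{\id\}$. Three of the groups, namely $\aut(T)$, $\homeo(\partial T)$, and $\aut(\partial T)$, are literal subgroups of $\bij(\partial T)$, so their centralizers of $G$ are contained in $C_{\bij(\partial T)}(G) = \{\id\}$ and the conclusion is immediate.

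The substantive case is the measure-theoretic group $\widetilde{\aut}(\partial T, \mu)$ (which contains $\aut(\partial T, \mu)$), whose elements are equivalence classes modulo null sets rather than honest bijections. Given $f$ in its centralizer of $G$, I would choose a Borel representative $\tilde f \colon A \to B$ with $A, B \subset \partial T$ conull Borel. For each $g \in G$ the relation $\tilde f(gx) = g\tilde f(x)$ holds on a conull Borel set $X_g$, and since $G$ is countable, $X' := \bigcap_{g \in G} X_g$ is again conull. A short computation $\tilde f((gh)x) = (gh)\tilde f(x) = g\tilde f(hx)$ then shows that $X'$ is $G$-invariant. Further trimming, using that $\tilde f$ is measure-class preserving and iterable on a conull set, produces a conull Borel set
\[
X_0 \;\subset\; X' \cap \bigcap_{n \in \mathbb{Z}} \tilde f^{\,n}(X')
\]
which is both $G$-invariant and $\tilde f$-invariant, and such that $\tilde f|_{X_0}$ is a bijection of $X_0$ commuting with $G$ pointwise.

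The final step is to define $\hat f \in \bij(\partial T)$ by $\hat f = \tilde f$ on $X_0$ and $\hat f = \id$ on the $G$-invariant complement $\partial T \setminus X_0$; this $\hat f$ is a genuine bijection of $\partial T$ and commutes with $G$ pointwise on both pieces. By Theorem \ref{ThCentralizer} we get $\hat f = \id$, hence $\tilde f = \id$ on the conull set $X_0$, so $f$ is the identity class in $\widetilde{\aut}(\partial T, \mu)$. The main obstacle is precisely this last measure-theoretic case, and within it the delicate step is the construction of $X_0$: one has to show the successive null-set trimmings preserve conullity, $G$-invariance, $\tilde f$-invariance, and pointwise equivariance simultaneously. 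The two ingredients that make this bookkeeping work are countability of $G$ (allowing the intersection over all group elements in one step) and the fact that $\tilde f$ is measure-class preserving (allowing indefinite iteration without losing conullity).
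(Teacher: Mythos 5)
Your proposal is correct, and for four of the five groups it is exactly the paper's (unwritten) argument: the paper simply points to the inclusions \eqref{EqGrIncl}, under which each group sits inside $\bij(\partial T)$, so its centralizer of $G$ is contained in $C_{\bij(\partial T)}(G)=\{\id\}$ by Theorem \ref{ThCentralizer}. Where you diverge is in the treatment of $\widetilde{\aut}(\partial T,\mu)$ (and $\aut(\partial T,\mu)$): the paper implicitly regards these as groups of genuine point maps, so that the same one-line inclusion argument applies, whereas you read them as groups of classes modulo null sets and therefore supply the extra step of trimming a representative to a conull, $G$- and $\tilde f$-invariant Borel set $X_0$ and extending by the identity off $X_0$ to land back in $\bij(\partial T)$. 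That extra step is sound (countability of $G$, $\mu$-invariance of the $G$-action, and the measure-class-preserving property of $\tilde f$ are exactly what make the successive intersections conull), and it buys a version of the corollary that is robust to the standard "mod null sets" interpretation of $\widetilde{\aut}(\partial T,\mu)$ used in Question \ref{ConjKosyak}; the paper's reading makes the statement an immediate formal consequence but leaves that interpretation unaddressed.
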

 \noindent Here $\mu$ is the unique $G$-invariant probability measure on $\partial T$. Thus, for every weakly branch group the action of $G$ on $(\partial T,\mu)$ satisfies conditions $1)$ and $2)$ of Question  \ref{ConjKosyak}. Since the corresponding Koopman representation is reducible, this gives a class of dynamical systems for which conditions $1)$ and $2)$ are not sufficient to assure irreducibility of $\kappa$.

\section{Stabilizers and centralizers of weakly branch group actions.}
In this section we will prove Theorems \ref{ThQRDist} and \ref{ThCentralizer}. For simplicity, we denote $\partial T$ by $X$ and $\partial T_v$ by $X_v$ for a vertex $v$ of $T$.
\begin{Lm}\label{LmQC} Let $G$ be a weakly branch group acting on a spherically homogeneous rooted tree $T$. Then for every pair $x, y\in X$ of points from disjoint orbits the groups $\st_G(x)$ and $\st_G(y)$ are not quasi-conjugate in $G$.
\end{Lm}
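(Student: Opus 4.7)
The plan is to prove the stronger assertion that the $\st_G(x)$-orbit of $y$ is infinite. This gives $[\st_G(x):\st_G(x)\cap\st_G(y)]=\infty$, so $\st_G(x)$ and $\st_G(y)$ are not commensurable. Since a conjugate $g\st_G(x)g^{-1}$ equals $\st_G(gx)$ and the point $gx$ lies in $Gx$, which is disjoint from $Gy$ by hypothesis, the same argument with $gx$ in place of $x$ rules out commensurability for every $g\in G$, that is, rules out quasi-conjugacy. Let $N$ be the first level at which the paths to $x$ and $y$ diverge, and for $n\geqslant N$ denote by $v_n(y)$ the prefix of $y$ at level $n$. Since $v_n(y)$ does not lie on the path to $x$, every element of $\rist_{v_n(y)}(G)$ fixes $x$, so $\rist_{v_n(y)}(G)\subset\st_G(x)$.

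The crucial step is: for every vertex $v$ on the path to $y$, $\rist_v(G)\not\subset\st_G(y)$. Granting this, for each $n\geqslant N$ pick $h_n\in\rist_{v_n(y)}(G)$ with $h_n(y)\neq y$. Since $h_n$ is supported in $T_{v_n(y)}$, we have $h_n(y)\in X_{v_n(y)}\setminus\{y\}$, so $h_n(y)\to y$ in $X$ as $n\to\infty$ while $h_n(y)\neq y$. A finite set of points of $X\setminus\{y\}$ is bounded away from $y$, so $\{h_n(y):n\geqslant N\}$ must be infinite, producing infinitely many distinct elements of the $\st_G(x)$-orbit of $y$.

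To verify the crucial step, suppose for contradiction that $\rist_v(G)\subset\st_G(y)$ for some $v$ on the path to $y$. First, $\rist_v(G)$ is normal in $\st_G(v)$: for $h\in\st_G(v)$, conjugation gives $h\rist_v(G)h^{-1}=\rist_{h(v)}(G)=\rist_v(G)$. Therefore the set of points of $X_v$ fixed by $\rist_v(G)$ is $\st_G(v)$-invariant, and in particular contains the whole orbit $\st_G(v)\cdot y$. Second, $\st_G(v)$ acts level-transitively on $T_v$: given two descendants $u_1,u_2$ of $v$ at the same level, any $g\in G$ sending $u_1$ to $u_2$ (one exists by level-transitivity of $G$) automatically fixes their common ancestor $v$. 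Hence $\st_G(v)\cdot y$ is dense in $X_v$. The common fixed-point set of a family of tree automorphisms is closed in $X_v$, so $\rist_v(G)$ fixes $X_v$ pointwise; but a tree automorphism acting trivially on $X_v$ must be trivial on $T_v$, and therefore $\rist_v(G)=\{e\}$. This contradicts the weakly branch hypothesis. The main obstacle is precisely this density-via-normality step; the remaining reductions are essentially bookkeeping.
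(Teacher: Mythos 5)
Your proof is correct, and its overall skeleton matches the paper's: reduce quasi-conjugacy to commensurability, observe that commensurability fails once a suitable stabilizer orbit is shown to be infinite, and manufacture infinitely many distinct orbit points from rigid stabilizers of deeper and deeper vertices along one of the two paths (your ``finite sets are bounded away from $y$'' observation is the same accumulation argument as the paper's inductive choice of a subsequence $k_l$ with $h_{k_l}x\notin X_{v_{k_{l+1}}}$; note the paper works with $\st_G(y)x$ rather than $\st_G(x)y$, but the situation is symmetric). Where you genuinely diverge is in the crucial step of producing, for a vertex $v$ on the path to the point, an element of $\rist_v(G)$ that actually moves that point. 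The paper does this constructively: it takes any nontrivial $g_k$ supported on $X_{v_k}$, finds a vertex $u$ at some level $m$ moved by $g_k$, and conjugates by an element $h$ with $hu=v_m$; since $h$ necessarily fixes $v_k$, the conjugate $hg_kh^{-1}$ still lies in the rigid stabilizer of $v_k$ and now displaces the chosen boundary point. You instead prove the cleaner intermediate statement that $\rist_v(G)\not\subset\st_G(y)$ by contradiction, via normality of $\rist_v(G)$ in $\st_G(v)$, level-transitivity of $\st_G(v)$ on $T_v$, and closedness of fixed-point sets, concluding that a rigid stabilizer fixing $y$ would fix a dense subset of $X_v$ and hence be trivial. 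Both routes are valid; the paper's is more elementary and hands you an explicit element, while yours isolates a reusable folklore fact (no point of $X_v$ is fixed by all of $\rist_v(G)$) at the cost of a softer topological argument.
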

\begin{proof} Assume that for some $x,y$ from disjoint orbits the groups $\st_G(x)$ and $\st_G(y)$ are quasi-conjugate in $G$. By conjugating one of the groups by an appropriate element of $g$ we can assume that $\st_G(x)$ and $\st_G(y)$ are commensurable (see Subsection \ref{SubsecQR}). Equivalently, the orbits $\st_G(x)y$ and $\st_G(y)x$ are finite. Let us show that the latter is false.

Choose sufficiently large $n$ such that for the vertices $v,w\in V_n$ with $x\in  X_v$, $y\in  X_w$ one has $v\neq w$. For any $k>n$ let $v_k\in V_k$ be the vertex such that $ X_{v_k}\ni x$. Since $G$ is weakly branch, there exists $g_k\in G,g_k\neq\id$ (where $\id$ is the trivial automorphism) such that $\supp(g_k)\subset  X_{v_k}$. Then there exists $m>k$ and a vertex $u\in V_m$ such that $g_ku\neq u$. Since $G$ is level transitive, there exists $h\in G$ such that $hu=v_m$. Set $h_k=hg_kh^{-1}$. Then $\supp(h_k)\subset  X_v$, so in particular $h_k\in\st_G(y)$, and $h_kv_m\neq v_m$, and thus $h_k x\neq x$.

Finally, construct inductively an increasing sequence $k_l$ such that $h_{k_l}x\notin  X_{v_{k_{l+1}}}$ for every $l$. Then $h_{k_l}x$ are pairwise distinct, which shows that $\st_G(y)x$ is infinite. This contradiction finishes the proof of Lemma \ref{LmQC}
\end{proof}
As a corollary using Mackey Theorem \ref{ThMackey} we obtain Theorem \ref{ThQRDist}.
 \begin{proof}[{\bf Proof of Theorem \ref{ThCentralizer}}]
 Assume that the centralizer $\mathcal C$ of a weakly branch group $G$ in $\bij( X)$ is not trivial. Let $c\in\mathcal C$, $c\neq\id$. Let $x\in  X$ such that $cx\neq x$. Consider two cases:\\
 $a)$ $cx\in Gx$. Then $c$ preserves the orbit $Gx$ and the unitary operator $C:l^2(Gx)\to l^2(Gx)$ given by:
 $$(Cf)(y)=f(c^{-1}y)$$ commutes with the representation $\rho_x$. Since $\rho_x$ is irreducible, by Schur's Lemma (see \eg \cite{BeHaVa}, Proposition 2.2) $C$ is a scalar operator. But this is impossible, since $C\delta_{x}=\delta_{cx}$ and $\delta_{cx}$ is orthogonal to $\delta_x$.\\
 $b)$ $cx\notin Gx$. Then $c$ maps the orbit $Gx$ onto the orbit $Gcx$ and the corresponding unitary operator $C:l^2(Gx)\to l^2(Gcx)$ intertwines representations $\rho_x$ and $\rho_{cx}$, which is impossible since by Theorem \ref{ThQRDist} representations $\rho_x$ and $\rho_{cx}$ are disjoint.
 \end{proof}

\section{Irreducibility of Koopman representations of weakly branch groups.}\label{SecIrredKoop}
In this section we will prove Theorem \ref{ThBranchIrred}. Let $G$ be a weakly branch group acting on a rooted tree $T$, $X=\partial T$ and $p\in\mathcal P$ (see \eqref{EqMP}). First we need to show that the Koopman representation corresponding to the action of $G$ on $( X,\mu_p)$ is well-defined (\ie that the measure $\mu_p$ is quasi-invariant).

Let $ T_n$ be the finite subtree of $T$ composed from levels up to the $n$th. Observe that for each $n$ the finite group of automorphism $\aut(T_n)$ can be identified with a subgroup of $\aut(T)$ consisting of elements $g$ such that all the restrictions of $g$ on subtrees of $T$ emerging from vertices of $n$-th level are trivial. For $g\in\aut(T)$  we denote by $g^{(n)}\in \aut( T_n)$ the automorphism of $T_n$ induced by $g$. We consider $g^{(n)}$ as an element of $\aut(T)$.
 \begin{Prop}\label{PropPolBound} For any subexponentially bounded automorphism $g$ of a $d-$regular rooted tree $T$ and any $p\in\mathcal P$ the measure $\mu_p$ on $ X$ is quasi-invariant with respect to $g$.
 \end{Prop}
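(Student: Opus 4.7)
The plan is to construct the Radon--Nikodym derivative $d(g_*\mu_p)/d\mu_p$ as an almost sure limit of martingale approximations on level-$n$ cylinders, and then exploit the self-similar structure of $T$ to prove this derivative integrates to~$1$.

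First, I would define
\[
D_n(\omega) \;=\; \frac{\mu_p(X_{g^{-1}[\omega]_n})}{\mu_p(X_{[\omega]_n})}\;=\;\prod_{i=1}^{n}\frac{p_{(g^{-1}\omega)_i}}{p_{\omega_i}},
\]
where $[\omega]_n$ denotes the level-$n$ vertex through which the path $\omega$ passes. A direct check shows $D_n$ is the Radon--Nikodym derivative of $g_*\mu_p$ restricted to the $\sigma$-algebra $\mathcal F_n$ generated by level-$n$ cylinders. Consequently $(D_n)$ is a non-negative martingale under $\mu_p$ with $\int D_n\, d\mu_p = 1$, so $D_n \to D_\infty$ $\mu_p$-a.s.\ and $\int D_\infty\, d\mu_p \leq 1$ by Fatou. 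The task is to show equality.

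The key step is a self-similarity recursion. For $v \in V_n$ and $\omega \in X_v$, the decomposition $g^{-1}\omega = (g^{-1}v)\cdot (g_{g^{-1}v})^{-1}(\tilde\omega)$, where $\tilde\omega$ is the tail of $\omega$ past level $n$ identified with an element of $\partial T$, gives the factorization
\[
D_\infty(\omega) = D_n(\omega)\cdot D_\infty^{g_{g^{-1}v}}(\tilde\omega),
\]
with $D_\infty^h$ built analogously from any $h \in \aut(T)$. Setting $f(h) := \int D_\infty^h\, d\mu_p$, integrating over $X_v$, summing over $V_n$, and substituting $v' = g^{-1}v$ produces the recursion
\[
f(g) = \sum_{v \in V_n} \mu_p(X_v)\, f(g_v).
\]
Using $f(\id) = 1$ and $0 \leq f(h) \leq 1$, this yields $f(g) \geq 1 - \sum_{v : g_v \neq \id}\mu_p(X_v) \geq 1 - k_n(g)\,\gamma^n$, where $\gamma = \max_i p_i < 1$. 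Since $g$ is subexponentially bounded, $k_n(g)\gamma^n \to 0$, so $f(g) = 1$. Hence $g_*\mu_p = D_\infty\,\mu_p$, giving $g_*\mu_p \ll \mu_p$. The same argument applied to $g^{-1}$ (which is subexponentially bounded because $k_n(g^{-1}) = k_n(g)$) yields $\mu_p \ll g_*\mu_p$, so $\mu_p$ is quasi-invariant.

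The principal obstacle is the bookkeeping behind the recursion: one has to carefully verify how $g$ decomposes on each cylinder $X_v$ as a permutation of the finite initial segment followed by a sub-automorphism acting on the tail, confirm the multiplicative factorization of $D_\infty$, and relate the conditional distribution of $\tilde\omega$ on $X_v$ to $\mu_p$ on $\partial T$ under the natural identification $X_v \cong \partial T$. Once the recursion is established, the remaining estimates are immediate consequences of $\gamma < 1$ and the definition of subexponential boundedness.
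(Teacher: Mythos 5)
Your argument is correct, but it takes a genuinely different route from the paper's. The paper's proof is a three-line localization argument: letting $A_n$ be the set of level-$n$ vertices with nontrivial section and $M_n=\bigcup_{v\in A_n}X_v$, subexponential boundedness gives $\mu_p(M_n)\leqslant (\max_i p_i)^n\,k_n(g)\to 0$; off $M_n$ the automorphism $g$ coincides with the finitary truncation $g^{(n)}$, which manifestly preserves the class of $\mu_p$ (its Radon--Nikodym derivative is locally constant and positive), so $g$ has a well-defined positive Radon--Nikodym derivative almost everywhere. You instead run the density martingale $D_n=\dd(g_*\mu_p)|_{\mathcal F_n}/\dd\mu_p|_{\mathcal F_n}$ and rule out a singular part in the Lebesgue decomposition by proving $\int D_\infty\,\dd\mu_p=1$ via the self-similar recursion $f(g)=\sum_{v\in V_n}\mu_p(X_v)f(g_v)$; the same hypothesis $k_n(g)(\max_i p_i)^n\to 0$ enters at the very end, and $k_n(g^{-1})=k_n(g)$ handles the reverse absolute continuity. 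I checked the recursion: the section identity $g^{-1}(v\tilde\omega)=(g^{-1}v)\,(g_{g^{-1}v})^{-1}(\tilde\omega)$ and the product structure of $\mu_p$ do give the claimed factorization, and the bound $f(g)\geqslant 1-k_n(g)(\max_ip_i)^n$ follows from $f(\id)=1$ and $0\leqslant f\leqslant 1$. Your approach is closer in spirit to Kravchenko's treatment of finite-state automorphisms cited in the paper, costs more bookkeeping, but buys more: it exhibits the Radon--Nikodym cocycle explicitly as the a.s.\ limit $D_\infty$ of the cylinder ratios and shows the approximations are uniformly integrable, which is the formula implicitly used later (e.g.\ in the computation of $\dd\mu_p(\alpha(g)^{-1}x)/\dd\mu_p(x)$ in Lemma \ref{LmgxiA}). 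Two cosmetic points: your $\gamma=\max_i p_i$ clashes with the paper's later $\gamma(p)$, and note that $\max_i p_i<1$ does require $d\geqslant 2$, which holds here.
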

 \begin{proof} Let $g\in\aut(T)$ be subexponentially bounded and $p\in\mathcal P$. Denote by $A_n$ the set of vertices $v\in V_n$ such that the restriction $g_v$ is not equal to identity. Let $P=\max\{p_1,\ldots,p_d\}$. Set
 $M_n=\bigcup\limits_{v\in A_n} X_v.$ Then by the definition of subexponential boundedness one has:
 $$\mu_p(M_n)\leqslant P^n|A_n|\to 0\;\;\text{when}\;\;n\to\infty.$$ Clearly, for every $n$ the automorphism $g^{(n)}$ preserves the class of measure $\mu_p$. Observe that on  $ X\setminus M_n$ the automorphism $g$ coincides with $g^{(n)}$, and thus has well-defined Radon-Nikodym derivative on this set. It follows that $g$ has well-defined Radon-Nikodym derivative for almost all $x\in  X$ and hence is measure class preserving.
 \end{proof}
 \begin{Prop}\label{PropErg} For any $p\in\mathcal P$ and any subexponentially bounded group $G$ acting level-transitively on a $d$-regular rooted tree $T$ the measure $\mu_p$ on $ X$ is ergodic with respect to the action of $G$.
 \end{Prop}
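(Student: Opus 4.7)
Suppose for contradiction that some $G$-invariant Borel set $A\subset X$ has $\mu_p(A)=\alpha\in(0,1)$. The argument will combine the Lebesgue (martingale) differentiation theorem for the Bernoulli measure $\mu_p$, the level-transitivity of $G$, and the finite-tree approximation of subexponentially bounded automorphisms supplied by Proposition~\ref{PropPolBound}.

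\textbf{Step 1 (Lebesgue).} Setting $\rho_n(v)=\mu_p(A\cap X_v)/\mu_p(X_v)$, the martingale $f_n=\sum_v\rho_n(v)\mathbf 1_{X_v}$ converges to $\mathbf 1_A$ $\mu_p$-a.e.\ and in $L^2$. In particular, for any $\epsilon>0$ one can find $n$ and vertices $v,v'\in V_n$ with $\rho_n(v)>1-\epsilon$ and $\rho_n(v')<\epsilon$.

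\textbf{Step 2 (transfer).} Level-transitivity gives $g\in G$ with $gv=v'$; by $G$-invariance, $A\cap X_{v'}=g(A\cap X_v)$ modulo null sets. Subexponential boundedness of $g$ (Proposition~\ref{PropPolBound}) lets one pick $N\geq n$ so large that both $\mu_p(M_N(g))$ and $k_N(g)P^N$ are less than $\epsilon\min(\mu_p(X_v),\mu_p(X_{v'}))$, where $M_N(g)=\bigcup_{u\in V_N,\,g_u\neq \id}X_u$ and $P=\max p_i<1$. Outside $M_N(g)$ the map $g$ agrees with the finite-tree automorphism $g^{(N)}\in\aut(T_N)$.

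\textbf{Step 3 (rigidity and conclusion).} The key identity is that whenever $u\leq v$ lies in $V_N$ and $g_u=\id$, the restriction of $g$ to $X_u$ is the rigid bijection $X_u\to X_{g^{(N)}u}$ induced by the natural product identification, so that together with $G$-invariance of $A$ one has $\rho_N(u)=\rho_N(g^{(N)}u)$. A second application of Step~1 at level $N$ inside $X_v$ produces a subset $\mathcal U$ of $u\leq v$ with $g_u=\id$ and $\rho_N(u)\geq 1-\sqrt\epsilon$ satisfying $\sum_{u\in\mathcal U}\mu_p(X_u)\geq(1-c\sqrt\epsilon)\mu_p(X_v)$ for an absolute constant $c$. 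The rigidity identity and the bound on the bad contribution then yield
$$\mu_p(A\cap X_{v'})\geq\sum_{u\in\mathcal U}\rho_N(g^{(N)}u)\mu_p(X_{g^{(N)}u})\geq(1-\sqrt\epsilon)\Bigl(\mu_p(X_{v'})-k_N(g)P^N-E\Bigr),$$
where $E=\sum_{u\leq v,\,g_u=\id,\,\rho_N(u)<1-\sqrt\epsilon}\mu_p(X_{g^{(N)}u})$. Showing $E=o(\mu_p(X_{v'}))$ contradicts $\rho_n(v')<\epsilon$ once $\epsilon$ is sufficiently small.

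\textbf{Main obstacle.} The quantitative heart of the argument is controlling $E$, i.e., the $\mu_p(X_{g^{(N)}u})$-mass of those good cylinders on which $A$ has low density. Lebesgue controls the $\mu_p(X_u)$-mass of such $u$, but passing to $\mu_p(X_{g^{(N)}u})$ incurs the distortion factor $\mu_p(X_{g^{(N)}u})/\mu_p(X_u)$, which can be as large as $(P/p)^{N-n}$ with $p=\min p_i$. Hence the argument must coordinate the choices of $\epsilon$ and $N$: one fixes $n$ and $\epsilon$ first, then picks $N$ so large that the subexponential bound on $k_N(g)$ absorbs all the geometric factors $(P/p)^N$, using that $k_N(g)\gamma^N\to 0$ for every $\gamma<1$ and in particular for $\gamma=p^2/P$ or similar. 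This interplay between the subexponential decay and the geometric blowup inherent in the Bernoulli measure is the delicate step.
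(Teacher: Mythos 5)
Your overall strategy (Lebesgue density at two vertices of the same level, transfer by a group element, rigidity on the cylinders where the restriction of $g$ is trivial) is reasonable in outline, and your rigidity identity $\rho_N(u)=\rho_N(g^{(N)}u)$ for $g_u=\id$ is correct: on such a cylinder the normalized restrictions of $\mu_p$ to $X_u$ and to $X_{g^{(N)}u}$ are both the tail Bernoulli measure, so the density of an invariant set is preserved. The genuine gap is exactly the term $E$ that you yourself flag, and the mechanism you propose for killing it is a non sequitur. The subexponential hypothesis bounds $k_N(g)$, the number of level-$N$ vertices with $g_u\neq\id$; but the cylinders contributing to $E$ are vertices with $g_u=\id$ and $\rho_N(u)<1-\sqrt\epsilon$, and nothing in the hypotheses bounds their \emph{number} --- only their total $\mu_p(X_u)$-mass is small (by the martingale/Markov estimate), and that is compatible with there being exponentially many of them. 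Passing from $\sum\mu_p(X_u)$ to $\sum\mu_p(X_{g^{(N)}u})$ multiplies each term by the Radon--Nikodym constant $\mu_p(X_{g^{(N)}u})/\mu_p(X_u)$, which is not uniformly bounded in $N$, and no factor of $k_N(g)$ appears anywhere in $E$ for the condition $k_N(g)\gamma^N\to 0$ to act on. Put differently, $E=\mu_p(g(B))$ for a set $B$ of small measure, and for a merely nonsingular transformation a set of small measure can have an image of measure close to $\mu_p(X_{v'})$; excluding this would require something like uniform integrability over $N$ of the Radon--Nikodym derivatives of $g$ relative to the level-$N$ partitions, which you neither assume nor prove.

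The paper's proof avoids this issue by never transferring masses of images of low-density cylinders: on each rigid cylinder $X_{v_i}$ it compares $A\cap X_{v_i}$ with $X_{v_i}\setminus A$, so the Radon--Nikodym constant cancels in the ratio $\mu_p(X_{v_i}\setminus A)/\mu_p(A\cap X_{v_i})$. This leads to the level- and vertex-independence of that ratio (equation \eqref{EqMupXvw}), hence to the identity $\mu_p(B\setminus A)=\bigl(\mu_p(A)^{-1}-1\bigr)\mu_p(A\cap B)$ for every clopen $B$, and the contradiction is obtained by taking $B$ to be a clopen approximation of $A$ itself (this last step plays the role of your appeal to Lebesgue density). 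To salvage your route you should replace the one-sided transfer of density by this two-sided, odds-ratio comparison, or else supply a genuine argument controlling the $\mu_p$-measure of images of small sets under $g$ uniformly in the level.
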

 \begin{proof} Assume that there exists a $G$-invariant subset $A\subset T$ such that $0<\mu_p(A)<1$. Let $v,w\in V_n$ for some $n\in\mathbb N$. Then there exists $g\in G$ such that $gv=w$. By subexponential boundedness of $g$ for every $\epsilon>0$ there exists $m>n$, $k\in\mathbb N$ and a collection $v_1,\ldots,v_k\in V_m$ such that
 the restriction of $g$ on the subtree $T_{v_i}$ is trivial for every $i=1,\ldots,k$ and $$\mu_p(B_m)>1-\epsilon,\;\;\text{where}\;\;B_m=\bigcup\limits_{i=1}^k X_{v_i}.$$ In particular, $g$ coincides with $g^{(m)}$ on $B_m$ and hence the Radon-Nikodym derivative of $g$ is constant on $X_{v_i}$ for every $i$. It follows that $$\frac{\mu_p(g(A\cap X_{v_i}))}{\mu_p(A\cap X_{v_i})}=\frac{\mu_p(g(X_{v_i}\setminus A))}{\mu_p(X_{v_i}\setminus A)}.$$ Since $\epsilon>0$ is arbitrary using $G$-invariance of $A$ we obtain that
  \begin{equation}\label{EqMupXvw}\frac{\mu_p(X_w\setminus A)}{\mu_p(A\cap X_w)}=\frac{\mu_p(g(X_v\setminus A))}{\mu_p(g(A\cap X_v))}=\frac{\mu_p(X_v\setminus A)}{\mu_p(A\cap X_v)}.\end{equation}
Since $v,w\in V_n$ are arbitrary, the latter is equal to $\tfrac{\mu_p(X\setminus A)}{\mu_p(A)}=\tfrac{1}{\mu_p(A)}-1.$ It follows that for every clopen set $B\subset X$ one has:
$$\mu_p(B\setminus A)=\big(\tfrac{1}{\mu_p(A)}-1\big)\mu_p(A\cap B).$$ Recall that clopen subsets of $X$ (which is homeomorphic to a Cantor set) approximate all measurable subsets by measure with arbitrary precision. Choosing a clopen subset $B$ such that $$\mu_p(A\cap B)>\tfrac{1}{2}\mu_p(A)\;\;\text{and}\;\;\mu_p(B\setminus A)<\tfrac{1}{2}(1-\mu_p(A))$$ we obtain a contradiction. This finishes the proof.
 \end{proof}
The proof of Theorem \ref{ThBranchIrred} is based on several technical statements. The following statement is a generalization of Proposition 23 from \cite{DuGr14}. The proof is similar. For the reader's convenience we present it here.
\begin{Prop}\label{PropBrAbsNonfree} Let $G$ be a subexponentially  bounded countable weakly branch group acting on a $d-$regular rooted tree $T$, $p\in\mathcal P^{*}$ $($see \eqref{EqMP*}$)$ and $\mu_p$ be the corresponding Bernoulli measure on $ X$. For any clopen subset $A\subset X$ and any $\epsilon >0$ there exists $g\in G$ such
that $$\supp(g)\subset A\;\;\text{and}\;\; \mu_p(A\setminus\supp(g))<\epsilon.$$\end{Prop}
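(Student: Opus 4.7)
The plan is to proceed in two stages: a reduction to the case of a single cylinder, followed by an iterative construction inside the cylinder using weak-branch elements and level transitivity.

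First I would handle the reduction. Since $A$ is clopen in the Cantor set $X=\partial T$, it decomposes as a finite disjoint union $A=\bigsqcup_{i=1}^{N} X_{v_i}$ of cylinders, for some vertices $v_1,\ldots,v_N$ at a common level. For $i\ne j$ any element of $\rist_{v_i}(G)$ commutes with any element of $\rist_{v_j}(G)$ because their supports lie in disjoint subtrees, and products of such elements lie in $G$ and are supported in $A$. Thus it is enough to prove the following local statement: for every vertex $v$ and every $\delta>0$, there exists $g\in\rist_v(G)$ with $\mu_p(X_v\setminus\supp(g))<\delta$. Applying this with $\delta=\epsilon/N$ to each $v_i$ and multiplying the resulting elements then yields the $g$ required by the proposition.

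Next I would build $g$ inside a fixed cylinder $X_v$ by an iterative commuting-product construction. Maintain at step $n$ an element $g_n\in\rist_v(G)$ together with a clopen remainder set $B_n\subset X_v$ that is disjoint from $\supp(g_n)$. Begin with $g_0=e$, $B_0=X_v$. Given $g_n$ and $B_n$, decompose $B_n=\bigsqcup_j X_{u_j}$ at a sufficiently deep common level. For each $u_j$, the weak branch hypothesis furnishes a nontrivial $h_j\in\rist_{u_j}(G)$ whose support contains the cylinder $X_{w_j}$ for some vertex $w_j$ descending from $u_j$ that $h_j$ moves. Subexponential boundedness of $h_j$ lets me approximate the open set $\supp(h_j)$ from inside by a clopen subset $D_j$ containing $X_{w_j}$, with $\mu_p(\supp(h_j)\setminus D_j)$ arbitrarily small; this is immediate from the proof of Proposition \ref{PropPolBound}, which shows that $\supp(h_j)$ and $\supp(h_j^{(m)})$ differ by a set of measure tending to zero. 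I then set $h:=\prod_j h_j$, which lies in $\rist_v(G)$ since its factors commute by disjointness of supports, and define $g_{n+1}:=g_n h$ and $B_{n+1}:=B_n\setminus\bigsqcup_j D_j$. Since $\supp(h)\subset B_n$ is disjoint from $\supp(g_n)$, we have $\supp(g_{n+1})=\supp(g_n)\sqcup\supp(h)\supseteq\supp(g_n)\sqcup\bigsqcup_j D_j$, so $B_{n+1}$ is clopen and disjoint from $\supp(g_{n+1})$.

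The main obstacle is termination of this construction in finitely many steps, which requires a uniform positive lower bound on the relative progress $\bigl(\mu_p(B_n)-\mu_p(B_{n+1})\bigr)/\mu_p(B_n)$. To produce such a bound I would exploit level transitivity of $G$. Fix a reference nontrivial $h^*\in\rist_{u^*}(G)$, where $u^*$ is at level $\ell$ below $v$, with $\supp(h^*)\supset X_{w^*}$ for some $w^*$ at level $\ell+k$ below $v$. For every $u\in V_\ell$ descending from $v$, level transitivity supplies an element of $G$ carrying $u^*$ to $u$; conjugating $h^*$ by this element gives a nontrivial element of $\rist_u(G)$ whose support contains a cylinder at level $\ell+k$ below $v$. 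The relative $\mu_p$-measure of such a cylinder inside $X_u$ is bounded below by $p_{\min}^k$, where $p_{\min}=\min_i p_i>0$, a quantity independent of $u$. Taking at step $n$ the collection of these transported elements over the cylinders of $B_n$ (refined to the appropriate level), and using fresh reference data at each stage matched to the current level of $B_n$, I would obtain $\mu_p(B_{n+1})\leqslant(1-p_{\min}^k)\mu_p(B_n)$ plus a negligible error from the clopen approximation. This geometric decay forces $\mu_p(B_n)<\delta$ after finitely many steps, completing the construction.
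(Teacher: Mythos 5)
Your reduction to single cylinders and your iterative ``fill in the remainder'' scheme match the paper's overall strategy (the paper runs the same induction, shrinking $\mu_p(A\setminus\supp(g_n))$ by a fixed factor at each step). The gap is in the one place where a real idea is needed: the uniform lower bound on the relative progress. Your bound $p_{\min}^{k}$ comes from transporting a reference element $h^*\in\rist_{u^*}(G)$ around a fixed level $\ell$, where $k$ is the depth below $u^*$ of the first vertex that $h^*$ moves. Level transitivity does make $k$ uniform over all vertices \emph{of the same level} $\ell$, but your construction forces the cylinders of $B_n$ to lie at strictly increasing levels $\ell_n$ (since $B_{n+1}$ is $B_n$ minus clopen pieces sitting strictly inside its cylinders), so you must, as you say, take ``fresh reference data at each stage,'' and the depth becomes $k_n=k(\ell_n)$. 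The weak branch hypothesis only says $\rist_v(G)\neq\{\id\}$; it gives no bound whatsoever on how far below $v$ one must descend before an element of $\rist_v(G)$ moves a vertex, so $k(\ell)$ may grow arbitrarily fast in $\ell$. Then $\mu_p(B_{n+1})\leqslant(1-p_{\min}^{k_n})\mu_p(B_n)$ only yields $\mu_p(B_N)\leqslant\mu_p(B_0)\prod_{n}(1-p_{\min}^{k_n})$, and if $\sum_n p_{\min}^{k_n}<\infty$ this product converges to a positive limit: the argument does not show the remainder drops below a given $\delta$. Writing the decay factor as a single $(1-p_{\min}^{k})$ hides exactly this failure of uniformity.

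The paper's proof is built precisely to supply the missing level-independent constant. Its key lemma asserts that for \emph{every} vertex $v$ there is $g\in G$ with $\supp(g)\subset X_v$ and $\mu_p(\supp(g))\geqslant\frac{1}{d}\mu_p(X_v)$, where $d$ is the valency --- a constant not depending on the level of $v$ or on the group. This is obtained not by exhibiting one moved cylinder, but by taking $L$ minimal such that some nontrivial $g$ supported in $X_v$ lies in $\st_G(L)$, choosing $g$ to \emph{maximize} the number $K=|W(g)|$ of level-$L$ vertices below which it acts nontrivially, and exploiting the relation $W(g_ig_j)\supset W(g_i)\Delta W(g_j)$ for conjugates $g_i=h_igh_i^{-1}$: maximality forces $|W(g_i)\Delta W(g_j)|\leqslant K$, and the combinatorial Lemma \ref{LmOnSubs} (a transitive group acting on $n$ points cannot have a set $A$ with all pairwise symmetric differences of translates at most $|A|$ unless $|A|>n/2$) then gives $K>\frac{1}{2}|V_L(v)|$, hence cylinders over more than a $\frac{1}{d}$ fraction of $V_{L+1}(v)$ inside $\supp(g)$. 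A final averaging of $\mu_p(\supp(\tilde g_i))$ over conjugates by a family acting transitively on $V_{L+1}(v)$ converts this counting bound into the measure bound $\frac{1}{d}\mu_p(X_v)$ (this step matters because the covered cylinders could a priori all carry the smallest weights). You would need to prove something of this kind --- or otherwise produce a level-uniform lower bound on $\mu_p(\supp(g))/\mu_p(X_v)$ --- for your induction to close.
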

Let us prove an auxiliary combinatorial lemma.
\begin{Lm}\label{LmOnSubs} Let $n\in\mathbb{N}$ and let $H<\sym(n)$ be a subgroup
acting transitively on $\{1,2,\ldots,n\}$. Let $A$ be a subset
 of $\{1,\ldots,n\}$ such that for all $g\neq h\in H$ one has
 $|g(A)\Delta h(A)|\leqslant |A|$, where $|A|$ is the cardinality of $A$. Then $|A|>n/2$.
\end{Lm}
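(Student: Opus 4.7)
The plan is to rephrase the hypothesis as a pointwise lower bound on the intersections $|k(A)\cap A|$ for $k\in H$, and then double-count the sum $\sum_{k\in H}|k(A)\cap A|$ in two ways, using transitivity of $H$ to evaluate it exactly.

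First, I would simplify the hypothesis. For $g\neq h$ in $H$, setting $k=h^{-1}g$, one has $|g(A)\Delta h(A)|=|k(A)\Delta A|$ since $h$ acts as a bijection on $\{1,\ldots,n\}$. So the assumption is equivalent to the statement: $|k(A)\Delta A|\leqslant |A|$ for every $k\in H\setminus\{e\}$. Using the identity $|k(A)\Delta A|=2|A|-2|k(A)\cap A|$, this is equivalent to
\[
|k(A)\cap A|\geqslant |A|/2 \quad \text{for all } k\in H\setminus\{e\},
\]
and for $k=e$ the bound $|A\cap A|=|A|\geqslant |A|/2$ holds trivially.

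Second, I would evaluate $S:=\sum_{k\in H}|k(A)\cap A|$ two different ways. On one hand, applying the lower bound above gives $S\geqslant |A|+(|H|-1)\cdot |A|/2=|A|(|H|+1)/2$. On the other hand, writing $|k(A)\cap A|=\sum_{i,j\in A}\mathbf{1}_{k(j)=i}$ and interchanging summation, we get $S=\sum_{i,j\in A}|\{k\in H:k(j)=i\}|$. Since $H$ acts transitively on $\{1,\ldots,n\}$, by orbit-stabilizer the set $\{k\in H:k(j)=i\}$ has cardinality $|H|/n$ for every pair $(i,j)$. Hence $S=|A|^2\cdot |H|/n$.

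Comparing the two expressions yields $|A|^{2}|H|/n\geqslant |A|(|H|+1)/2$, i.e.\ $|A|\geqslant n(|H|+1)/(2|H|)>n/2$, as desired. The only real content is recognizing the double-counting trick: once one rewrites the hypothesis as a uniform lower bound on $|k(A)\cap A|$, transitivity makes the sum over $H$ independent of how $A$ sits inside $\{1,\ldots,n\}$, and the bound drops out. I do not anticipate any genuine obstacle beyond locating this viewpoint.
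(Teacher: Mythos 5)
Your proof is correct and is essentially the paper's argument in different notation: the paper computes $\bigl\|\sum_{h\in H}\xi_h\bigr\|^2$ (with $\xi_h$ the indicator vector of $h(A)$) in two ways, which after expanding the inner products is exactly your double count of $\sum_{k\in H}|k(A)\cap A|$ multiplied by $|H|$, with transitivity invoked at the same step to evaluate the sum exactly. The only caveat, shared with the paper's own proof, is the implicit division by $|A|$ at the end, which requires $A\neq\emptyset$ (for $A=\emptyset$ the hypothesis holds vacuously but the conclusion fails; in the lemma's application $A$ is always nonempty).
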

\begin{proof} Set $k=|A|$. Then for any two distinct elements $h, g\in H$ we have:
 $$|h(A)\cap g(A)|=\frac{1}{2}(|h(A)|+|g(A)|-|h(A)\Delta g(A)|)\geqslant k/2.$$
 For every $h\in H$ introduce a vector $\xi_h\in \mathbb{C}^n$ by:
 $$\xi_h=(x_1,\ldots,x_n),\;\;\text{where}\;\;x_i=\left\{\begin{array}{ll}
 0,&\text{if}\;\;i\notin h(A),\\1,&\text{if}\;\;i\in h(A).
 \end{array}\right.$$ Denote by $(\cdot,\cdot)$  the standard scalar product in $\mathbb C^n$ and by $\|\cdot\|$ the corresponding norm. Then for every $h\neq g\in H$ one has:  $\|\xi_h\|^2=k$ and $(\xi_h,\xi_g)=|h(A)\cap g(A)|\geqslant k/2$.
 We obtain:
 $$\|\sum\limits_{h\in H}\xi_h\|^2\geqslant \tfrac{k}{2}m(m+1),\;\;\text{where}\;\;m=|H|.$$
 On the other hand, the group $H$ acts on $\mathbb{C}^n$ by permuting coordinates such that $g(\xi_h)=\xi_{gh}$ for all $g,h\in H$. Using transitivity of $H$ and the fact that the vector
  $\sum\limits_{h\in H}\xi_h$ is fixed by $H$ we get:
  $$\sum\limits_{h\in H}\xi_h=(\tfrac{km}{n},\tfrac{km}{n},\ldots,\tfrac{km}{n}),\;\;
  \|\sum\limits_{h\in H}\xi_h\|^2=\tfrac{k^2m^2}{n}.$$ It follows that $km\geqslant n\tfrac{m+1}{2}$ and
  $k>\tfrac{n}{2}$.
\end{proof}

 \noindent Let $d\geqslant 2$ be the valency of the regular rooted tree $T$.
 The proof of Proposition
 \ref{PropBrAbsNonfree} is based on the following:
 \begin{Lm} If $G$ is weakly branch then for every vertex $v$ there exists $g\in G$ with $\supp(g)\subset X_v$ such
 that $\mu_p(\supp(g))\geqslant \tfrac{1}{d}\mu_p(X_v)$.
 \end{Lm}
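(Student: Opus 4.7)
The plan is to start with an arbitrary nontrivial element of $\rist_v(G)$, which exists by the weak-branch hypothesis, and then amplify it by conjugation and multiplication until the measure of its support reaches the threshold $\frac{1}{d}\mu_p(X_v)$. The combinatorial Lemma \ref{LmOnSubs} will be the engine that lets us enlarge supports step by step.

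Concretely, let $v \in V_k$, and pick $h \in \rist_v(G) \setminus \{\id\}$. Let $r \geqslant 1$ be the smallest integer such that $h$ moves some vertex at level $k+r$; then $h$ fixes every vertex at level $k+r-1$ below $v$, and for at least one such vertex $u^\ast$, the induced permutation $\sigma$ of the $d$ children of $u^\ast$ is nontrivial. Because $G$ is level-transitive, the stabilizer $\st_G(u^\ast)$ (which is contained in $\st_G(v)$) acts transitively on the children of $u^\ast$, so its image $H$ in $\sym(d)$ is a transitive subgroup.

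Next I would apply Lemma \ref{LmOnSubs} iteratively to the set $A = \supp(\sigma) \subseteq \{1,\dots,d\}$. So long as $|A| \leqslant d/2$, the contrapositive of the lemma provides $\pi \in H$ with $|\pi(A)\,\triangle\, A| > |A|$. Lifting $\pi$ to $g \in \st_G(u^\ast)$ and replacing $h$ by $h \cdot g h g^{-1}$ yields a new element of $\rist_v(G)$ whose induced permutation at $u^\ast$ has strictly larger support (the support of a product of permutations contains the symmetric difference of their supports). After finitely many such steps we obtain $\tilde h \in \rist_v(G)$ whose children-permutation at $u^\ast$ moves a set $A^\ast$ with $|A^\ast| > d/2$, so that $\supp(\tilde h) \cap X_{u^\ast} \supseteq \bigsqcup_{i \in A^\ast} X_{u^\ast_i}$. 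Then I would spread this local gain over the whole level: transitivity of $\st_G(v)$ on $V_{k+r-1}^v$ lets us pick $\gamma_u \in \st_G(v)$ with $\gamma_u(u^\ast) = u$ for every $u \in V_{k+r-1}^v$, and the conjugates $\gamma_u \tilde h \gamma_u^{-1}$ have pairwise disjoint supports contained in the distinct $X_u$'s. Their product $g = \prod_u \gamma_u \tilde h \gamma_u^{-1}$ lies in $\rist_v(G)$, and $\mu_p(\supp(g))$ is the sum of the contributions from each $u$.

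The main obstacle is converting the cardinality bound $|A^\ast| > d/2$ produced by Lemma \ref{LmOnSubs} into the measure-theoretic bound $\mu_p(\supp(g)) \geqslant \tfrac{1}{d}\mu_p(X_v)$. For the uniform Bernoulli measure this reduction is automatic (one even gets $\tfrac{1}{2}$ instead of $\tfrac{1}{d}$), but for a general $p$ one must ensure that the moved children at each $u$ carry enough $p_i$-weight. I would expect this to be secured either by choosing the lift $\pi$ at each iteration so as to favor heavy children, or by appealing to a weighted analogue of Lemma \ref{LmOnSubs} suited to the product measure $\mu_p$; in either case, the $\tfrac{1}{d}$ bound in the conclusion is the weaker measure-theoretic counterpart of the cardinality bound $> d/2$, reflecting the $d$-fold branching at each vertex.
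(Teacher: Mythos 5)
Your plan parallels the paper's for the combinatorial half --- both rest on $\supp(\alpha\beta)\supseteq\supp(\alpha)\,\Delta\,\supp(\beta)$ together with Lemma \ref{LmOnSubs} to force a set of moved vertices of cardinality more than half of a level --- but the argument has a genuine gap at exactly the point you flag, and that point is the crux of the lemma. The passage from the cardinality bound $|A^\ast|>d/2$ to the measure bound $\mu_p(\supp(g))\geqslant\tfrac{1}{d}\mu_p(X_v)$ cannot be deferred: for $p\in\mathcal P^{*}$ the $\lfloor d/2\rfloor+1$ moved children may all be light (take $d=3$, $p=(0.98,0.015,0.005)$; two moved children can carry total weight $0.02<\tfrac13$), and Lemma \ref{LmOnSubs} controls only cardinalities, so ``choosing the lift to favor heavy children'' has no leverage. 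The paper's resolution is a second averaging step that is absent from your proposal: having produced one element $g$ with $\supp(g)\supset\bigcup_{u\in\mathcal W}X_u$ for a set $\mathcal W\subset V_{L+1}(v)$ with $|\mathcal W|>\tfrac{1}{d}|V_{L+1}(v)|$, it conjugates $g$ by elements $\tilde h_1,\dots,\tilde h_r$ whose level-$(L+1)$ truncations form a group preserving and transitive on $V_{L+1}(v)$. Each vertex of $V_{L+1}(v)$ then occurs exactly $r/|V_{L+1}(v)|$ times among the images of each $u\in\mathcal W$, whence $\sum_i\mu_p(\supp(\tilde h_ig\tilde h_i^{-1}))\geqslant|\mathcal W|\tfrac{r}{|V_{L+1}(v)|}\mu_p(X_v)>\tfrac{r}{d}\mu_p(X_v)$, and the pigeonhole principle yields one conjugate with support of measure greater than $\tfrac1d\mu_p(X_v)$. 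Averaging over a transitive family is what makes the bound independent of $p$; without it the lemma is not proved.

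There is a second, smaller flaw: in your spreading step the conjugates $\gamma_u\tilde h\gamma_u^{-1}$ do \emph{not} have pairwise disjoint supports contained in the distinct $X_u$. Weak branchness only provides $\supp(h)\subset X_v$, and nothing confines $\supp(\tilde h)$ to $X_{u^\ast}$ (your initial $h$ may act nontrivially below many vertices of level $k+r-1$), so the sets $\gamma_u(\supp(\tilde h))$ can overlap and the support of the product need not contain their union --- cancellation can occur. A structural remark for comparison: the paper avoids your iteration by fixing $L$ minimal among the levels $l(g)$ of nontrivial elements supported in $X_v$ and then an extremal $g$ maximizing $|W(g)|$; that maximality verifies the hypothesis $|W(g_i)\,\Delta\,W(g_j)|\leqslant|W(g_ig_j)|\leqslant|W(g)|$ of Lemma \ref{LmOnSubs} in one shot, applied to the vertices of $V_L(v)$ below which $g$ permutes children nontrivially rather than to the children of a single vertex. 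Your iterative use of the contrapositive is a legitimate alternative for the cardinality statement, but the measure-theoretic conclusion still requires the averaging argument.
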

 \begin{proof}  First, let us introduce some notations. For an element $h\in G$ set $$l(h)=\max\{l:h\in\st_G(l)\}.$$ As before, for $h\in\aut(T)$ and $n\in\mathbb N$ symbol $h^{(n)}$ denotes the element of $\aut(T_n)$ induced by $h$. We will use the same symbol for the corresponding permutation of $V_n$. For a vertex $v\in V_n,n\in\mathbb N$ and $l\geqslant n$ set $V_l(v)=T_v\cap V_l$.

 Fix a vertex $v$ of the tree. Since $G$ is weakly branch there exist $g\neq \id$ such that
 $\supp(g)\subset X_v$. Set
 $$L=\min\{l(g):g\in G,g\neq \id,\supp(g)\subset X_v\}.$$
   For $g\in G$ denote by $W(g)$ the set of vertices $w$ from
 $V_L(v)$ such that $g$ induces a nontrivial permutation on $V_{L+1}(w)$. Set
 $$k(g)=|W(g)|,\;\;K=\max\{k(g):g\in G,\supp(g)\subset X_v\}.$$ By the choice of $L$, $K>0$. Fix an element $g\in G$ with $\supp(g)\subset X_v$
 such that $k(g)=K$.

Further, since $G$ acts transitively on $V_L$, we can find a number $m$
 and a collection of elements $H=\{h_1,h_2,\ldots,h_m\}\subset G$ such that the family $S=
 \{h_1^{(L)},\ldots,h_m^{(L)}\}$ of transformations of $V_L$
 forms a group preserving $V_L(v)$ and transitive on
 $V_L(v)$. Denote $g_i=h_igh_i^{-1}$. One has:
 $$W(g_i)=h_i^{(L)}(W(g)),\;\;W(g_ig_j)\supset
 W(g_i)\Delta W(g_j)$$ for all $i,j$. It follows that the set $W(g)$ together with the action of the group $S$ restricted
 to $V_L(v)$ satisfy the conditions of Lemma \ref{LmOnSubs}.
 Therefore, $K=|W(g)|> \tfrac{1}{2}|V_L(v)|$.

 For each $w\in W(g)\subset V_L(v)$ the element $g$
 induces a nontrivial permuation of $V_{L+1}(w)$, and thus
there exists at least two vertexes $w_1,w_2\in V_{L+1}(w)$ such that $$\supp(g)\supset X_{w_1}\cup X_{w_2}.$$ Set $\mathcal W=\{w_1,w_2:w\in W(g)\}.$ We have: $$|\mathcal W|> \tfrac{1}{d}|V_{L+1}(v)|\;\;\text{and}\;\;\supp(g)\supset \bigcup\limits_{u\in\mathcal W}X_u.$$

Now, since $G$ is level transitive, we can find a number $r$
 and a collection of elements $\widetilde H=\{\tilde h_1,\tilde h_2,\ldots,\tilde h_r\}\subset G$ such that the family $\tilde S=
 \{\tilde h_1^{(L+1)},\ldots,\tilde h_r^{(L+1)}\}$ of transformations of $V_{L+1}$
 forms a group preserving $V_{L+1}(v)$ and transitive on
 $V_{L+1}(v)$. Denote $\tilde g_i=\tilde h_ig\tilde h_i^{-1}$. One has:
$$\supp(\tilde g_i)\supset \bigcup\limits_{u\in\mathcal W}X_{\tilde h_i^{(L+1)}(u)}.$$ Since $\widetilde S$ is a group acting transitively on $V_{L+1}(v)$, for every $u\in \mathcal W$ the multiset $\{\tilde h_i^{(L+1)}(u):i=1,\ldots,r\}$ contains every vertex $u_1\in V_{L+1}(v)$ equal number $\frac{r}{|V_{L+1}(v)|}$ times. It follows that $$\sum\limits_{i=1}^r \mu_p(\supp(\tilde g_i))\geqslant |\mathcal W|\frac{r}{|V_{L+1}(v)|}\mu_p(X_v)>\tfrac{r}{d}\mu_p(X_v).$$ Therefore, there exists $i$ such that  $\mu_p(\supp(\tilde g_i))>\tfrac{1}{d}\mu_p(X_v)$. This finishes the proof.
\end{proof}
\begin{proof}[\textbf{Proof of Proposition \ref{PropBrAbsNonfree}}] Let $A$ be any clopen set and $g_0=\id$.
Construct by induction elements $g_n\in G,n=0,1,2\ldots$ such that
$\supp(g_n)\subset A$ and $\mu_p(A\setminus\supp(g_n))\leqslant \big(\tfrac{d}{d+1}\big)^n$.
 If $g_n$ is constructed choose vertices $v_1,\ldots v_k$ such that $X_{v_j}$ are disjoint subsets
 of $A\setminus\supp(g_n)$ and $\sum\mu_p(X_{v_j})\geqslant \tfrac{d}{d+1}\mu_p(A\setminus\supp(g_n))$.
  Using the lemma construct  elements $h_1,\ldots h_k$ such that
  $\supp(h_j)\subset X_{v_j}$ and $\mu_p(\supp(h_j))\geqslant\tfrac{1}{d}\mu_p(X_{v_j})$.
 Set $g_{n+1}=g_nh_1h_2\ldots h_k$. Then $\mu_p(A\setminus \supp(g_{n+1}))
 \leqslant \tfrac{d}{d+1}\mu_p(A\setminus \supp(g_n))$, which finishes the proof.
\end{proof}
As before, let $\st_G(1)=\{g\in G:gv=v\;\;\text{for each}\;\;v\in V_1\}$ be the stabilizer of the first level of $T$. For each point $x\in X$ denote by $N_g(x)\in\mathbb{Z}_+\cup \{\infty\}$ the number of the  vertices $v$ on the path defined by $x$ such that  $g_v\notin \st_G(1)$. Let $ T_n$ be the finite subtree of $T$ composed from levels up to the $n$th. Observe that for each $n$ the finite group of automorphism $\aut(T_n)$ can be identified with a subgroup of $\aut(T)$ consisting of elements $g$ such that $g_v$ is trivial for every $v\in V_n$. For $g\in\aut(T)$  we denote by $g^{(n)}\in \aut( T_n)$ the automorphism of $T_n$ induced by $g$. We consider $g^{(n)}$ as an element of $\aut(T)$.
\begin{Co}\label{CoNg} For every clopen set $A$ with $\mu_p(A)>0$, any $\epsilon>0$ and $k\in\mathbb{N}$ there exists $g\in G$ such that $\supp(g)\subset A$ and $$\mu_p(\{x:N_{g}(x)\geqslant k\})> (1-\epsilon)\mu_p(A).$$
\end{Co}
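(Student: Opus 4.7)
The plan is induction on $k$, with base case $k=1$ following directly from Proposition \ref{PropBrAbsNonfree}: if $\supp(g)\subset A$ and $\mu_p(A\setminus\supp(g))<\epsilon\mu_p(A)$, then for each $x\in\supp(g)$ at least one vertex $v$ on the path of $x$ must have $g_v\notin\st_G(1)$ (otherwise $g$ would fix $x$), so $N_g(x)\geqslant 1$.

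For the inductive step, given $k$, a clopen set $A$, and $\epsilon>0$: first apply the inductive hypothesis with tolerance $\tfrac{\epsilon}{3}$ to produce $g_k\in G$ supported in $A$ with $\mu_p(B_k)>(1-\tfrac{\epsilon}{3})\mu_p(A)$, where $B_k=\{x\in A:N_{g_k}(x)\geqslant k\}$. Since $P=\max_i p_i<1$ (because $d\geqslant 2$), subexponential boundedness of $g_k$ gives $P^N k_N(g_k)\to 0$ as $N\to\infty$, so a level $N$ can be chosen large enough that $A$ is a disjoint union of level-$N$ cylinders and the subcollection $\mathcal V'=\{v\in V_N:X_v\subset A,\,(g_k)_v=\id\}$ satisfies $\mu_p(\bigsqcup_{v\in\mathcal V'}X_v)>(1-\tfrac{\epsilon}{3})\mu_p(A)$.

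Next, apply Proposition \ref{PropBrAbsNonfree} inside each $X_v$, $v\in\mathcal V'$, with tolerance $\tfrac{\epsilon}{3}$ to obtain $h^{(v)}\in G$ with $\supp(h^{(v)})\subset X_v$ and $\mu_p(X_v\setminus\supp(h^{(v)}))<\tfrac{\epsilon}{3}\mu_p(X_v)$. Set $h=\prod_{v\in\mathcal V'}h^{(v)}$ (the factors commute by disjointness of supports) and let $g=h\cdot g_k$; then $\supp(g)\subset A$. The analysis of $N_g$ hinges on the product formula $g_v=h_{g_k(v)}(g_k)_v$ together with the structural observation that $h^{(w)}$ fixes every child of any vertex $u$ strictly above $w$: the child on the path to $w$ is fixed as a vertex because $h^{(w)}$ preserves $X_w$, and the other children lie outside $\supp(h^{(w)})$. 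Hence $h^{(w)}_u\in\st_G(1)$ and, since $\st_G(1)$ is a subgroup, $h_{g_k(v)}\in\st_G(1)$ for every $v$ with $|v|<N$; this forces $g$ to reproduce exactly the activations of $g_k$ at all levels below $N$. On the other hand, for $x\in X_w$ with $w\in\mathcal V'$ and $x\in\supp(h^{(w)})$, triviality of $g_k$ on $X_w$ gives $g_v=h^{(w)}_v$ for every $v$ on the path of $x$ with $|v|\geqslant N$, while $h^{(w)}(x)\neq x$ forces at least one such $v$ with $h^{(w)}_v\notin\st_G(1)$, supplying the one extra activation needed.

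Summing the three $\tfrac{\epsilon}{3}$-estimates shows the good set $\mathcal G=\bigsqcup_{v\in\mathcal V'}(X_v\cap B_k\cap\supp(h^{(v)}))$ has measure exceeding $(1-\epsilon)\mu_p(A)$ and $N_g(x)\geqslant k+1$ on $\mathcal G$, completing the induction. The main technical obstacle is the bookkeeping for the product $g=h\cdot g_k$: one has to confirm simultaneously that $h$ does not disturb the activations of $g_k$ at any level below $N$ (the containment $h^{(w)}_u\in\st_G(1)$ for $u$ strictly above $w$ is essential here) and that it reliably adds at least one new activation at a level $\geqslant N$ for almost every $x$ in the relevant cylinder (here the trivialization of $g_k$ on $\mathcal V'$-cylinders combines with the large-support property from Proposition \ref{PropBrAbsNonfree}).
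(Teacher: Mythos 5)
Your overall strategy (induction on $k$, with Proposition \ref{PropBrAbsNonfree} supplying both the base case and one extra activation deep in the tree at each step) is the same as the paper's, but the inductive step as written contains a genuine error in the order of composition. You set $g=h\cdot g_k$ and correctly record $g_v=h_{g_k(v)}(g_k)_v$; you then claim that for $x\in X_w$ with $w\in\mathcal V'$ and $v$ on the path of $x$ with $|v|\geqslant N$ one has $g_v=h^{(w)}_v$, justified by ``triviality of $g_k$ on $X_w$''. The condition $(g_k)_w=\id$ does \emph{not} mean that $g_k$ acts trivially on $X_w$: it only means that $g_k$ maps $X_w$ \emph{rigidly} onto $X_{g_k(w)}$, and $g_k(w)$ need not equal $w$. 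In fact $g_k(w)\neq w$ is the typical case: since $\supp(g_k)$ fills up almost all of $A$, for $N$ large $g_k$ moves the level-$N$ vertex of almost every $x\in A$, so you cannot shrink $\mathcal V'$ to the cylinders fixed pointwise. When $g_k(w)=w'\neq w$ the correct identity is $g_v=h_{g_k(v)}=(h^{(w')})_{g_k(v)}$ with $g_k(v)$ lying below $w'$; thus $h^{(w)}$ contributes nothing to $g$ on $X_w$, the extra activation on $X_w$ occurs on $g_k^{-1}(\supp(h^{(w')}))$ (and only if $w'\in\mathcal V'$ at all), and your good set $\mathcal G=\bigsqcup_v\bigl(X_v\cap B_k\cap\supp(h^{(v)})\bigr)$ is not the set where $N_g\geqslant k+1$.

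The gap is repairable in two ways. Either keep $g=h\cdot g_k$ but restrict to $\mathcal V''=\{w\in\mathcal V':g_k(w)\in\mathcal V'\}$ (this loses measure at most $P^Nk_N(g_k)$, hence is harmless) and replace the good set by $\bigsqcup_{w\in\mathcal V''}\bigl(X_w\cap B_k\cap g_k^{-1}(\supp(h^{(g_k(w))}))\bigr)$, noting that the rigid map $X_w\to X_{g_k(w)}$ scales $\mu_p$ by a constant so the $\tfrac{\epsilon}{3}$ estimate survives; or simply compose in the other order, $g=g_k\cdot h$, for which $(g_kh)_v=(g_k)_{h(v)}h_v=(g_k)_vh_v$ at every vertex $v$ fixed by $h$, and the first activation vertex of $h^{(w)}$ along the path of $x$ is such a vertex. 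The paper takes this second route: it forms $gh$ with the perturbation $h\in\st_G(n+1)$ acting first, although it localizes $h$ by approximating the set $\{x:N_{g}(x)=N_{g^{(n)}}(x)=k\}$ by a clopen set rather than by your decomposition into level-$N$ cylinders with trivial sections. Your cylinder decomposition is a legitimate alternative once the composition order (or the good set) is corrected.
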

\begin{proof} We prove the statement by induction on $k$. The base follows immediately from Proposition \ref{PropBrAbsNonfree}. Assume that for given clopen set $A$ and $\epsilon>0$ we have an element $g$ such that
$$\mu_p(\{x:N_{g}(x)\geqslant k\})> (1-\epsilon)\mu_p(A).$$ For $n$ large enough one has $\mu_p(\{x:N_{g^{(n)}}(x)\geqslant k\})> (1-\epsilon)\mu_p(A)$. Let
$$D=\{x:N_{g}(x)\geqslant k\},\;C=\{x:N_{g}(x)= k\},\;B=\{x\in C:N_{g^{(n)}}(x)= k\}.$$
  Fix $\delta>0$. Increasing  $n$ if necessary we can assume that $\mu_p(C\setminus B)<\delta$. Find a clopen set $\tilde B\subset A$ such that $\mu_p(B\Delta\tilde B)<\delta$. Using Proposition \ref{PropBrAbsNonfree} find an element $h$ with $\supp(h)\subset\tilde B$ such that $\mu_p(\tilde B\setminus\supp(h))<\delta$ and $h\in\st_G(n+1)$. Then one has:
$$N_{gh}(x)\geqslant k+1\;\;\text{for all }\;\;x\in S=(B\cap\supp(h))\cup (D\setminus (C\cup\supp(h))).$$ By construction, $\mu_p(D\setminus S)<4\delta$. It follows that for $\delta$ sufficiently small one has
$$\mu_p(\{x:N_{gh}(x)\geqslant k+1\})\geqslant \mu_p(S)> (1-\epsilon)\mu_p(A).$$
\end{proof}

 For $g\in\aut(T)$ introduce an element $\alpha(g)\in \aut(T)$ as follows. For arbitrary $x\in X$ let $v$ be the vertex of a minimal level  from the path defined by $x$ such that $g(v)\neq v$. Write $x=vy$. Set
$$\alpha(g)x=g(v)y,$$ where $g(v)y$ should be understood as a concatenation of a finite word $g(v)$ and an infinite word $y$.   Observe that for every $x\in  X$ $\alpha(g)x$ differs with $x$ at most by one letter. Define $\beta(g)=g\alpha(g)^{-1}$. Observe that for all $x$ such that $gx\neq x$ one has \begin{equation}\label{EqAlBe}N_{\alpha(g)}(x)=1,\;\;N_{\beta(g)}(x)=N_g(x)-1.\end{equation}

Recall that $p_i$ are pairwise distinct. Set
$$a=a(p)=\min\{\tfrac{p_i}{p_j}:p_i>p_j\},\;\;\gamma=\gamma(p)=\frac{2\sqrt a}{a+1}.$$
%

\begin{Lm}\label{LmgxiA} Let $A$ be a clopen set and $\xi_A$ be the characteristic function of the set $A$. Let $g\in\aut(T)$, $g(A)=A$ and $N_g(x)\geqslant k$ for almost all $x\in A$, where $k\in\mathbb{N}$. Then $$(\kappa_p(g)\xi_A,\xi_A)\leqslant \gamma^k\mu_p(A).$$
\end{Lm}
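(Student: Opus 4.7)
The plan is to prove the estimate by induction on $k$. For the base case $k=0$, the inequality $\gamma^0\mu_p(A)=\mu_p(A)$ follows immediately from Cauchy--Schwarz and unitarity of $\kappa_p(g)$:
\[
 (\kappa_p(g)\xi_A,\xi_A)\leqslant \|\kappa_p(g)\xi_A\|\cdot\|\xi_A\|=\mu_p(A).
\]
For the inductive step I would start by rewriting
\[
 (\kappa_p(g)\xi_A,\xi_A)=\int_A\sqrt{J_g(x)}\,d\mu_p(x),
\]
using that $g^{-1}A=A$, where $J_g=d(g^{-1})_\ast\mu_p/d\mu_p$ is the Radon--Nikodym derivative. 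Approximating $g$ by its level-$n$ truncation $g^{(n)}$ and decomposing $A$ into cylinders $X_w$ ($w\in V_n$, $X_w\subset A$), this integral becomes
\[
 \sum_{w}\sqrt{\mu_p(X_w)\mu_p(X_{w'})}=\sum_w\prod_{i=1}^n\sqrt{p_{w_i}\,p_{w'_i}},
\]
with $w'=(g^{(n)})^{-1}w$. Note that the symmetry $\sum_w\mu_p(X_w)=\sum_w\mu_p(X_{w'})=\mu_p(A)$ holds because $g^{-1}A=A$ as a set.

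The crucial ingredient is the pointwise Bhattacharyya-type inequality $\sqrt{p_ip_j}\leqslant \gamma\cdot (p_i+p_j)/2$ for $i\neq j$, which follows from the definition $\gamma=2\sqrt a/(a+1)$, the fact that $\max(p_i,p_j)/\min(p_i,p_j)\geqslant a$ by definition of $a$, and the monotone decrease of $t\mapsto 2\sqrt t/(t+1)$ on $[1,\infty)$. Each position $i$ where $w_i\neq w'_i$ corresponds to a vertex of the path of $w$ at which $g$'s local action moves the letter on the path, so by the hypothesis $N_g(x)\geqslant k$ on $A$ there are at least $k$ such positions. Extracting one factor of $\gamma$ from each yields the factor $\gamma^k$; the remaining trivial positions contribute $p_{w_i}=p_{w'_i}$, and the induction hypothesis together with the decomposition $g=\beta(g)\alpha(g)$ (with $N_{\alpha(g)}=1$ and $N_{\beta(g)}=N_g-1$) handles the recursive book-keeping.

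The main obstacle is that the per-cylinder bound does not combine into a summable estimate in a purely multiplicative way, so the induction has to be carried out along the tree structure rather than cylinder-by-cylinder. I would split at the top level according to the permutation $\tau$ that $g$ induces on $V_1$: when $\tau=\mathrm{id}$ the set $A$ splits $g$-invariantly as $\bigsqcup_j(A\cap X_j)$, the root contributes nothing to $N_g$, and the bound reduces directly to the analogous bound for each restriction $g_j$ on $\tilde A_j\subset X$ with $N_{g_j}\geqslant k$, giving $\sum_j p_j\gamma^k\mu_p(\tilde A_j)=\gamma^k\mu_p(A)$. When $\tau\neq\mathrm{id}$ the contribution from an orbit $O\subset V_1$ of size $\geqslant 2$ involves cross inner products $(\kappa_p(g_{\tau^{-1}(j)})\xi_{\tilde A_{\tau^{-1}(j)}},\xi_{\tilde A_j})$; these are controlled by combining Cauchy--Schwarz on the sum $\sum_{j\in O}\sqrt{p_j\,p_{\tau^{-1}(j)}}\cdot c_j$ with the Bhattacharyya inequality applied to $\sqrt{p_j\,p_{\tau^{-1}(j)}}$ (extracting the factor $\gamma$ for nontrivial orbits) and the induction hypothesis applied to $g_{\tau^{-1}(j)}$ on $\tilde A_{\tau^{-1}(j)}$ (where now $N_{g_{\tau^{-1}(j)}}\geqslant k-1$). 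This orbit-wise combination is the technical heart of the proof.
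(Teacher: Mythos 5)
Your proposal is correct and takes essentially the same route as the paper: induction on $k$, the factorization $g=\beta(g)\alpha(g)$ with $N_{\beta(g)}=N_g-1$, the Bhattacharyya-type inequality $\sqrt{p_ip_j}\leqslant \gamma\,(p_i+p_j)/2$ for $i\neq j$, and an orbit-wise summation to absorb the cross terms. The only difference is bookkeeping: the paper runs the recursion through the globally defined $\alpha(g)$ and sums over its orbits $\{v_j\}$ (taken over all ``first nontrivial turn'' vertices, at varying levels), which cleanly sidesteps the non-terminating recursion that your level-by-level split would encounter in the $\tau=\mathrm{id}$ case.
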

\begin{proof}
We will prove the lemma using induction by $k$. The base of induction $k=0$ is trivial. Assume that the statement is true for given $k$. Let $g$ be such that $N_g(x)\geqslant k+1$ for almost all $x\in A$. We will use the presentation $g=\alpha(g)\beta(g)$ (see \eqref{EqAlBe}). Let $v$ be any vertex such that $\alpha(g)v\neq v$ and $\alpha(g)w=w$ for the vertex $w$ adjacent to $v$ above $v$. Denote by $v_0=v,v_1,\ldots,v_{l-1}$ the orbit of $v$ under $\alpha(g)$, where $\alpha(g)v_j=v_{j+1}$ for $j<l-1$ and $\alpha(g)v_{l-1}=v_0$. Observe that $\beta(g)v_j=v_j$ for every $j$. From the induction assumption it follows that $$(\kappa_p(\beta(g))\xi_{v_j},\xi_{v_j})\leqslant \gamma^k\mu_p(X_{v_j}).$$  For every $j$ let $q_j$ be the weight from $\{p_1,\ldots,p_d\}$ corresponding to the last letter of $v_j$. Then
$$\frac{\dd\mu_p(\alpha(g)^{-1}(x))}{\dd\mu_p(x)}=\frac{q_{j-1}}{q_j}$$ for all $x\in X_{v_j}$.
We obtain:
\begin{align*}\bigg(\kappa_p(g)\sum\limits_{j=0}^{l-1}\xi_{v_j},\sum\limits_{j=0}^{l-1}\xi_{v_j}\bigg)=
\sum\limits_{j=0}^{l-1}
\sqrt{\tfrac{q_{j-1}}{q_j}}(\kappa_p(\beta(g))\xi_{v_j},\xi_{v_j})\leqslant
\gamma^k\sum\limits_{j=0}^{l-1}\sqrt{\tfrac{q_{j-1}}{q_j}}\mu_p(X_{v_j})=\\ \gamma^k\sum\limits_{j=0}^{l-1}\sqrt{q_jq_{j-1}}\mu_p(X_{w})\leqslant\gamma^{k+1}
\sum\limits_{j=0}^{l-1}\tfrac{q_j+q_{j-1}}{2}\mu_p(X_v)=\gamma^{k+1}\sum\limits_{j=0}^{l-1}\mu_p(X_{v_j}).\end{align*}
Summing the last inequality over all the orbits $\{v_j\}$ as above (finite or countable number) we obtain the desired inequality.
\end{proof}

For a unitary representation of a discrete group $\Gamma$ in a Hilbert space $H$ denote by $\mathcal M_\pi$ the von Neumann algebra generated by operators $\pi(g),g\in \Gamma$ (\ie the closure of linear combinations of operators $\pi(g),g\in G$ in the weak operator topology). Let $B(H)$ be the algebra of all bounded operators on $H$ and $$\mathcal M_\pi'=\{R\in B(H):QR=RQ\;\;\text{for all}\;\;Q\in \mathcal M_\pi\}$$  be the commutant of $\mathcal M_\pi$.
The following fact is folklore:
 \begin{Lm}\label{LmFixed} Let $\pi$ be a unitary representation of a discrete group $\Gamma$ in a Hilbert space $H$. Set $H_1=\{\eta\in H:\pi(g)\eta=\eta\;\;\text{for all}\;\;g\in \Gamma\}$. Then the orthogonal projection $P$ onto $H_1$ belongs to $\mathcal M_\pi$.
 \end{Lm}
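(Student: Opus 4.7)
The plan is to invoke the von Neumann double commutant theorem and reduce the statement to the elementary observation that $H_1$ is preserved by every operator in the commutant $\mathcal{M}_\pi'$. Since $H_1 = \bigcap_{g\in\Gamma}\ker(\pi(g)-\id)$ is automatically a closed subspace of $H$, the orthogonal projection $P$ onto it is a well-defined bounded operator, and by von Neumann's bicommutant theorem $\mathcal{M}_\pi = \mathcal{M}_\pi''$. Hence it suffices to show that $P$ commutes with every $R\in\mathcal{M}_\pi'$.

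First I would check that $H_1$ is invariant under every $R\in\mathcal{M}_\pi'$: if $\eta\in H_1$ and $g\in\Gamma$, then
\[
\pi(g)(R\eta)=R\pi(g)\eta=R\eta,
\]
so $R\eta\in H_1$. Next I would use the fact that the commutant of a self-adjoint family of operators is a $*$-subalgebra of $B(H)$, so with $R\in\mathcal{M}_\pi'$ we also have $R^*\in\mathcal{M}_\pi'$. Applying the previous observation to $R^*$ shows that $H_1$ is $R^*$-invariant as well, which is equivalent to $H_1^\perp$ being $R$-invariant.

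Having $R$-invariance of both $H_1$ and $H_1^\perp$ for every $R\in\mathcal{M}_\pi'$ yields $RP=PR$ for all such $R$, i.e.\ $P\in\mathcal{M}_\pi''$. Combined with the bicommutant theorem this gives $P\in\mathcal{M}_\pi$, as desired. There is no real obstacle here beyond assembling these standard ingredients; the only subtlety worth flagging explicitly is that one must use the $*$-closedness of $\mathcal{M}_\pi'$ to obtain invariance of $H_1^\perp$, since a priori $H_1$ being $R$-invariant does not by itself imply that $P$ commutes with $R$.
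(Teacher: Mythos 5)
Your proof is correct and follows essentially the same route as the paper: both show $H_1$ is invariant under every $R\in\mathcal M_\pi'$ via $\pi(g)R\eta=R\pi(g)\eta=R\eta$, use $*$-closedness of the commutant to get the corresponding statement for $R^*$ (the paper phrases this as $BP=PBP$ and $B^*P=PB^*P$, you as invariance of $H_1^\perp$), and conclude $P\in\mathcal M_\pi''=\mathcal M_\pi$ by the bicommutant theorem. No discrepancies worth noting.
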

 \begin{proof} Let $B\in\mathcal M_\pi'$. Then $$\pi(g)B\eta=B\pi(g)\eta=B\eta$$ for every $\eta\in H_1,g\in \Gamma$. This implies that $BH_1\subset H_1$ and so $BP=PBP$. Same argument shows that $B^{*}P=PB^{*}P$, where $^{*}$ stands for the operation of conjugation in $B(H)$. Conjugating the latter identity we obtain that $PB=PBP=BP$. By von Neumann Bicommutant Theorem (see \eg Theorem 2.4.11 in \cite{BR}) we get that $P\in \mathcal (M_\pi')'=\mathcal M_\pi$.
 \end{proof}
For an open set $A\subset X$ define \begin{align}\label{EqHA}\begin{split}G_A=\{g\in G:\supp(g)\subset A\},\\ \mathcal{H}_A=\{\eta\in\mathcal{H}:\pi(g)\eta=\eta\;\;\text{for all}\;\;g\in G_A\}.\end{split}\end{align} Let $P_A$ be the orthogonal projection onto $\mathcal H_A$.  Applying Lemma \ref{LmFixed} to the restriction of the representation $\kappa_p$ onto the subgroup $G_A$ we obtain\begin{Co}\label{CoPA} For any open subset $A\subset X$ one has  $P_A\in \mathcal M_{\kappa_p}$.\end{Co}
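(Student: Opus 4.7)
The plan is to apply Lemma \ref{LmFixed} directly to the restriction of $\kappa_p$ to the subgroup $G_A$. Indeed, the space $\mathcal{H}_A$ defined in \eqref{EqHA} is exactly the subspace of vectors in $\mathcal{H}$ fixed by every operator $\kappa_p(g)$ with $g \in G_A$. Thus Lemma \ref{LmFixed}, applied with $\Gamma = G_A$ and $\pi = \kappa_p|_{G_A}$, yields that $P_A$ lies in the von Neumann algebra generated by the operators $\{\kappa_p(g) : g \in G_A\}$.

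The remaining step is a trivial inclusion: since $G_A \subset G$, each generator $\kappa_p(g)$, $g \in G_A$, is already a generator of $\mathcal{M}_{\kappa_p}$. Hence the von Neumann algebra generated by $\kappa_p(G_A)$ is a subalgebra of $\mathcal{M}_{\kappa_p}$, and in particular $P_A \in \mathcal{M}_{\kappa_p}$.

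There is no real obstacle here; the corollary is essentially a repackaging of Lemma \ref{LmFixed} applied to a subgroup, combined with the monotonicity of the generated von Neumann algebra with respect to the set of generators. The content of the statement is that, even though the projection $P_A$ is defined via invariance under a (possibly small) subgroup $G_A$, it can still be recovered from operators in $\mathcal{M}_{\kappa_p}$, and this will be useful later for cutting down invariants of $\mathcal{M}_{\kappa_p}'$ by clopen sets of small measure.
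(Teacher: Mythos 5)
Your proof is correct and follows exactly the paper's argument: apply Lemma \ref{LmFixed} to the restriction of $\kappa_p$ to $G_A$, then use the obvious inclusion of the von Neumann algebra generated by $\kappa_p(G_A)$ into $\mathcal M_{\kappa_p}$. The paper leaves the inclusion step implicit, but you have supplied nothing beyond what the paper intends.
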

\begin{Prop}\label{PropHA} One has $$\mathcal{H}_A=\{\eta\in\mathcal{H}:\supp(\eta)\subset X\setminus A\}.$$
\end{Prop}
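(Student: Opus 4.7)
The plan is to prove both inclusions separately. The direction $\supseteq$ is direct: if $\supp(\eta)\subset X\setminus A$ and $g\in G_A$, then $g$ acts as the identity off $A$ (so $g^{-1}x=x$ and $\phi_g(x)=1$ there), while on $A$ both $\eta(x)=0$ and $\eta(g^{-1}x)=0$ since $g^{-1}x\in A$; hence $\kappa_p(g)\eta=\eta$. For the direction $\subseteq$, given $\eta\in\mathcal{H}_A$, write $\eta=\eta\xi_A+\eta\xi_{X\setminus A}$. By $\supseteq$ the second summand lies in $\mathcal{H}_A$, so does the first by linearity. Thus it suffices to show that any $\eta\in\mathcal{H}_A$ with $\supp(\eta)\subset A$ must vanish.

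I argue by contradiction, assuming $\|\eta\|=1$. Since $A$ is clopen, for $n$ large enough $A=\bigsqcup_{v\in V_n(A)}X_v$ where $V_n(A):=\{v\in V_n:X_v\subset A\}$, and I let $\eta_n:=\sum_{v\in V_n(A)}c_v^{(n)}\xi_{X_v}$ be the orthogonal projection of $\eta$ onto the level-$n$ step functions (so $c_v^{(n)}$ is the mean of $\eta$ over $X_v$); then $\eta_n\to\eta$ in $L^2$. For $k\in\mathbb{N}$ and $\epsilon>0$, I apply Corollary~\ref{CoNg} separately to each $X_v$ to obtain $g_v\in\rist_v(G)$ with $\supp(g_v)\subset X_v$ and $\mu_p\{x\in X_v:N_{g_v}(x)\geqslant k\}>(1-\epsilon)\mu_p(X_v)$. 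The $g_v$ pairwise commute (disjoint supports), so $g:=\prod_{v\in V_n(A)}g_v$ lies in $G_A\cap\st_G(n)$ and preserves every cylinder $X_v$, $v\in V_n$.

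The main estimate: since $g\in\st_G(n)$, the operator $\kappa_p(g)$ respects the orthogonal decomposition $L^2(A)=\bigoplus_{v\in V_n(A)}L^2(X_v)$, giving
\[
(\kappa_p(g)\eta_n,\eta_n)=\sum_{v\in V_n(A)}|c_v^{(n)}|^2(\kappa_p(g_v)\xi_{X_v},\xi_{X_v}).
\]
For each $v$, since $g_v(X_v)=X_v$ and $N_{g_v}\geqslant k$ on a subset of $X_v$ of measure $>(1-\epsilon)\mu_p(X_v)$, applying Lemma~\ref{LmgxiA} (after passing to a $g_v$-invariant modification of $\{N_{g_v}\geqslant k\}\cap X_v$ and absorbing the exceptional set via Cauchy--Schwarz using $\|\kappa_p(g_v)\|\leqslant 1$) yields $(\kappa_p(g_v)\xi_{X_v},\xi_{X_v})\leqslant(\gamma^k+C\sqrt\epsilon)\mu_p(X_v)$ for an absolute constant $C$. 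Summing, $|(\kappa_p(g)\eta_n,\eta_n)|\leqslant(\gamma^k+C\sqrt\epsilon)\|\eta_n\|^2$. Since $\eta$ is $\kappa_p(g)$-fixed and $\eta_n\to\eta$ in $L^2$, this passes to $1=\|\eta\|^2\leqslant\gamma^k+C\sqrt\epsilon+o_n(1)$. As $\gamma=2\sqrt{a}/(a+1)<1$ (by AM--GM applied to $a,1$ with $a\in(0,1)$), choosing first $k$ with $\gamma^k<1/4$, then $\epsilon$ with $C\sqrt\epsilon<1/4$, and finally $n$ large enough, produces the contradiction $1\leqslant 1/2+o(1)$, forcing $\eta=0$.

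The main obstacle is exactly the handling of the exceptional set $\{x\in X_v:N_{g_v}(x)<k\}$ when applying Lemma~\ref{LmgxiA}: the lemma is stated under the almost-everywhere hypothesis $N_g\geqslant k$, whereas Corollary~\ref{CoNg} supplies this only on a $(1-\epsilon)$-subset. The exceptional set need not be $g_v$-invariant, so a direct splitting is delicate; one must either replace it by its $g_v$-orbit closure (and argue this still has small measure using subexponential boundedness) or adapt the inductive proof of Lemma~\ref{LmgxiA} itself. In either case the error is of order $\sqrt\epsilon$ and is absorbed by fixing $k$ before choosing $\epsilon$. Once this technical point is handled, the rest of the argument follows cleanly from the step-function approximation and the per-cylinder construction $g=\prod_v g_v$.
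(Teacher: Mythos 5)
Your proof follows essentially the same route as the paper's: reduce to a unit vector supported in $A$, approximate it by level-$n$ step functions, build $g=\prod_v g_v$ by applying Corollary \ref{CoNg} cylinder by cylinder, and contradict $\kappa_p(g)\eta=\eta$ via Lemma \ref{LmgxiA}. The exceptional-set issue you flag (Corollary \ref{CoNg} gives $N_g\geqslant k$ only off a set of measure $\epsilon\mu_p(X_v)$, whereas Lemma \ref{LmgxiA} assumes it almost everywhere) is genuine but is passed over silently in the paper as well; your Cauchy--Schwarz absorption of the error, with $k$ fixed before $\epsilon$, is a reasonable repair that does not alter the structure of the argument.
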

\begin{proof} Clearly, every $\eta$ with $\supp(\eta)\subset X\setminus A$ belongs to $\mathcal{H}_A$. Assume that $\mathcal{H}_A$ is strictly larger than the subspace of functions $\eta$ with $\supp(\eta)\subset X\setminus A$. Then there exists a unit vector $\eta\in\mathcal{H}_A$ such that $\supp(\eta)\subset A$. Fix such a vector.

For $n\in\mathbb N$ denote by $V_n(A)$ the set of vertices $v$ from $V_n$ such that $X_v\subset A$. Let $\epsilon>0$. Since locally constant functions are dense in $\mathcal H$, one can find a level $n$ and constants $\alpha_v,v\in V_n(A)$ such that $$\|\eta-\sum\limits_{v\in V_n(A)}\alpha_v\xi_{X_v}\|\leqslant \epsilon.$$  By Corollary \ref{CoNg} and Lemma \ref{LmgxiA} for each $v\in V_n(A)$ there exists a sequence of elements $g_{v,k}$ such that $$\supp(g_{v,k})\subset X_v\;\;\text{and}\;\;
\lim\limits_{k\to\infty}(\pi(g_{v,k})\xi_{X_v},\xi_{X_v})=0.$$ Set $h_k=\prod\limits_{v\in V_n(A)}g_{v,k}$. Then $\supp(h_k)\subset A$ and $$\lim\limits_{k\to\infty}(\pi(h_k)\sum\limits_{v\in V_n(A)}\alpha_v\xi_{X_v},\sum\limits_{v\in V_n(A)}\alpha_v\xi_{X_v})=0.$$
It follows that $$\limsup_{k\to\infty}|(\pi(h_k)\eta,\eta)|\leqslant 2\epsilon+\epsilon^2.$$ Taking $\epsilon$, for instance, to be $\tfrac{1}{3}$ we obtain a contradiction to the fact that $\eta\in \mathcal H_A$ is a unit vector. This finishes the proof.
\end{proof}
\begin{proof}[{\bf Proof of Theorem \ref{ThBranchIrred}}] $1)$ Proposition \ref{PropHA} means that for each open set $A\subset X$ the orthogonal projection $P_A$ onto $\mathcal H_A$ is the operator of multiplication by the characteristic function of $X\setminus A$. Observe that every function from $L^\infty(X,\mu_p)$ can be approximated arbitrarily well in $L^2$-norm by finite linear combinations of characteristic functions of open sets. This implies that for every $m\in L^\infty(X,\mu_p)$ the operator of multiplication by $m$
$$\mathcal H\to \mathcal H,\;\;f\to mf$$ can be approximated arbitrary well in the strong operator topology by finite linear combinations of projections $P_A\in\mathcal M_{\kappa_p}$ (see Corollary \ref{CoPA}), and thus belongs to the von Neumann algebra $\mathcal M_{\kappa_p}$ generated by operators $\kappa_p(g),g\in G$. This implies that $\mathcal M_{\kappa_p}$ contains operators of multiplication by all functions from $L^\infty(X,\mu_p)$. Since for an ergodic measure class preserving action of a group $G$ on a measure space $(X,\mu_p)$ the algebra generated by group shifts and multiplication by functions coincide with the algebra $B(\mathcal H)$ of all bounded operators on $\mathcal H$ (see \eg \cite{Tak3}, Corollary 1.6) we obtain that $\mathcal M_{\kappa_p}$ coincides with $B(\mathcal H)$. By Schur's Lemma (see \eg \cite{BeHaVa}, Theorem A.2.2) this implies irreducibility of $\kappa_p$.

$2)$ Let $x\in  X$. For an open subset $A$ of $ X$ denote by $\mathcal H^x_A$ the subspace of $l^2(Gx)$ analogous to \eqref{EqHA}, but corresponding to the representation $\rho_x$:
 $$\mathcal{H}^x_A=\{\eta\in l^2(Gx):\rho_x(g)\eta=\eta\;\;\text{for all}\;\;g\in G_A\}.$$ Let $P^x_A$ be the orthogonal projection onto $\mathcal H^x_A$. Assume that $\kappa_p$ and $\rho_x$ are unitary equivalent via intertwining isometry $$U:L^2( X,\mu_p)\to l^2(Gx),$$ that is $U\kappa_\pi(g)=\rho_x(g)U$ for every $g\in G$. Choose a sequence of open covers $A_n$ of the orbit $Gx$ such that $\mu_p(A_n)\to 0$ when $n\to\infty$. From the definition of orthogonal projections $P_A$ and subspaces $\mathcal H_A$ we obtain:
  $$UP_{A_n}U^{*}=P^x_{A_n}$$ for every $n$. Since $G$ is weakly branch for every $n$ and every $y\in Gx$ the set $\{gy:g\in G_{A_n}\}$ is infinite. This implies that $P_{A_n}^x=0$. However, Proposition \ref{PropHA} implies that $P_{A_n}\to \id$ weakly when $n\to\infty$. This contradiction shows that $\kappa_p$ and $\rho_x$ are disjoint (not unitary equivalent).

$3)$ Let $\tilde p\in\mathcal P^{*},\tilde p \neq p$. For an open subset $A$ of $ X$ denote by $\tilde{\mathcal H}_A$ the subspace of $L^2(X,\mu_{\tilde p})$ analogous to \eqref{EqHA}, but corresponding to the representation $\kappa_{\tilde p}$. Let $\tilde P_A$ be the orthogonal projection onto $\tilde{\mathcal H}_A$. Since $\mu_{\tilde p}$ is singular to $\mu_p$ there exists $A\subset X$ such that $\mu_p(A)=0$ and $\mu_{\tilde p}(A)=1$. Assume that $\kappa_p$ and $\kappa_{\tilde p}$ are unitary equivalent via intertwining isometry $$U:L^2( X,\mu_p)\to L^2( X, \mu_{\tilde p}).$$ Let $A_n$ be a sequence of open covers of $A$ such that $\mu_p(A_n)\to 0$ when $n\to\infty$. Since $A\subset A_n$ we have that $\mu_{\tilde p}(A_n)=1$ for every $n$. From the definition of orthogonal projections $P_A$ and subspaces $\mathcal H_A$ we obtain:
  $$UP_{A_n}U^{*}=\tilde P_{A_n}=\id$$ for every $n$. But from Proposition \ref{PropHA} we obtain that $P_{A_n}\to 0$ weakly when $n\to\infty$. This contradiction finishes the proof.\end{proof}


\subsection*{Acknowledgement} The authors acknowledge the support of the Swiss NSF. The authors acknowledge Maria Gabriella Kuhn for useful discussions.


\end{document}